\documentclass[12pt]{article}

\usepackage[utf8]{inputenc}
\usepackage[T1]{fontenc}
\usepackage{amsfonts}
\usepackage{amsmath}
\usepackage{amsbsy}
\usepackage{amstext}
\usepackage{amsxtra}
\usepackage{amsthm}
\let\origlrcorner\lrcorner 
\usepackage{mathabx}
\let\lrcorner\origlrcorner
\usepackage{amssymb}
\usepackage[heavycircles]{stmaryrd}
\usepackage{tikz-cd}
\usepackage{graphicx}
\usepackage{enumitem}
\usepackage[margin=2.5cm]{geometry}
\usepackage[colorlinks=true,pdftex,bookmarksopen,colorlinks]{hyperref}


\numberwithin{equation}{section}

\swapnumbers
\theoremstyle{definition}
\newtheorem{defn}{Definition}[section]
\newtheorem{obs}[defn]{Remark}
\newtheorem{exple}[defn]{Example}
\newtheorem{exples}[defn]{Examples}
\theoremstyle{plain}
\newtheorem{teo}[defn]{Theorem}

\newtheorem{prop}[defn]{Proposition}



\def\Z{\mathbb{Z}}
\def\R{\mathbb{R}}

\def\ins{\lrcorner}

\renewcommand{\d}{\mathbf{d}}

\newcommand{\abs}[1]{\left|{#1}\right|}

\def\Der{\operatorname{Der}}
\def\Id{\operatorname{Id}}

\def\lsem{\llbracket}
\def\rsem{\rrbracket}
\newcommand\superfold[1]{\left({#1}\middle|\mathcal{R}{#1}\right)}
\newcommand\supermap[2]{\left({#1}\middle|{#2}\right)}
\newcommand{\parity}[1]{\lfloor{#1}\rceil}

\newcommand{\inv}[1]{{#1}^{\operatorname{-}\!1}}
\newcommand{\set}[2][\empty]{\left\{{#2}\right\}_{#1}}
\newcommand{\optupla}[4][1]{{#2}_{#1}{#3}\cdots{#3}{#2}_{#4}}
\newcommand{\omitupla}[5][1]{{#2}_{#1}{#3}\cdots{#3}
\widehat{{#2}_{#4}}{#3}\cdots{#3}{{#2}_{#5}}}

\newcommand\sect[2][\empty]{\Gamma_{#1}\left({#2}\right)}
\newcommand\C{\mathcal{C}^{\infty}}



\begin{document}
\title{On Lie superalgebroids of Poisson type}
\author{Dennise García-Beltrán \and Óscar 
Guajardo}
\date{}
\maketitle

\begin{abstract}
The purpose of this work is to study Lie superalgebroid structures on the space 
of superdifferential $1$-forms over a special kind 
of supermanifolds ---which we call \textit{cotangent supermanifolds}--- whose 
superfunctions are the differential forms on its underlying manifold. These 
superalgbroids are constructed from graded Poisson structures defined on the 
latter superalgebra.
\end{abstract}

\textbf{MSC2010:} Primary: 53D17. Secondary: 17B60, 17B66.

\textbf{Keywords:} Superalgebroids, Poisson supergeometry, Gerstenhaber 
superalgebras.

\section*{Introduction}
Poisson manifolds and Lie algebroids are two well-studied structures in differential 
geometry, both for their applications in other areas of mathematics, as well as for their 
usefulness in describing physical phenomena; it is also well-known the interplay between 
these two structures (see for instance~\cite{GVV},  \cite{ks-poisson}, and 
references therein). 

On the other hand, given a Lie algebroid there is associated to it another algebraic 
structure, a \emph{Gerstenhaber algebra}, whose prototype is the $\Z$-graded algebra of 
sections of multivectors on a manifold $M$, endowed with both the wedge product and the 
Schouten-Nijenhuis bracket $[\cdot,\cdot]_{\operatorname{SN}}$; this bracket is the only 
extension to this space that restricts to the usual Lie bracket of vector fields, endows the 
former with the structure of a Lie algebra (actually, of a $\Z$-graded Lie algebra), and 
which is also compatible with the wedge product, in the sense that maps of the form 
$[P,\cdot]_{\operatorname{SN}}$ are derivations of the wedge product, for any section $P$ of 
$\Lambda TM$. These relations have also been well-studied---cf. e.g.~\cite{ks-exact}, 
\cite{xu}. Gerstenhaber algebras have also been used in theoretical physics, specifically in 
the study of BRST quantisation (see for example~\cite{ks-odd}). In a related direction, 
graded Poisson structures are also of interest, not only as a generalisation of (ungraded) 
Poisson structures to graded and supermanifolds, but also as algebraic structures relevant 
for the aforementioned studies---cf.~\cite{BM}, \cite{ci-even}, for instance.

In this paper we consider Poisson structures on the graded-commutative algebra 
of differential forms $\mathcal{A}=\Omega(M)$ of a smooth manifold $M$. We 
endow the pair $(M,\mathcal{A})$ with the structure of a smooth 
supermanifold and consider superalgebroids defined on the space 
$\Omega^{1}(\mathcal 
A)$ of graded (or super-) differential $1$-forms on this supermanifold. More 
specifically, we consider superalgebroid structures defined using the given 
Poisson bracket on $\mathcal A$.

The structure of the paper is as follows: in Section \ref{sec:pre} we define 
the concepts relevant to this work; to wit: supermanifolds, superbundles and 
superalgebroids. In Section \ref{sec:pois} we introduce graded Poisson 
structures on a superalgebra $\mathcal{A}$ and study some of their properties; 
in particular we construct Poisson-type superalgebroids on the space 
$\Omega^{1}(\mathcal A)$, which is defined as a certain subspace of 
$\mathcal{A}\otimes\mathcal{A}$; we also characterise these superalgebroids. In 
Section \ref{sec:gerst} we prove the equivalence between a Lie superalgebroid structure on 
$\Omega^{1}(\mathcal{A})$ and a Gerstenhaber superalgebra (suitably defined in this section) 
on graded forms $\Omega(\mathcal A)$; this result is well-known in the classical case (see, 
for instance, \cite{ksm}), but it seems to be lacking in the supergeometric setting, which 
is the reason why we write down a proof. This last section also establishes the equivalence 
between \emph{differential} Gerstenhaber superalgebras and certain Poisson superstructures.

\section{Preliminaries}\label{sec:pre}
	
In this work, the category of (real) supermanifolds will be understood in the sense of 
\cite{guajardo}; to wit, a supermanifold of dimension $(m|n)$ is a pair 
$\superfold{M}$ 
where $M$ is a smooth manifold of dimension $m$ and $\mathcal{R}M$ is a superalgebra bundle 
over $M$ such that each fibre $\mathcal{R}_{p}M$ (for any $p$ in $M$) is isomorphic to the 
exterior algebra $\Lambda{S}^{*}$ for some (real) vector space $S$ of dimension $n$; the 
\textbf{smooth superfunctions} are the sections of the bundle $\mathcal{R}M$, i.e., they are 
the elements of the supercommutative unital algebra  $\sect{\mathcal{R}M}$. 

By a \textbf{supervector bundle} over $\superfold{M}$ we mean simply a vector 
bundle $E$ 
over $M$ such that
\begin{enumerate}
	\item $E=E_{0}\oplus{E_1}$, and	
	\item each fibre $E_p$ carries the structure of a (left) $\mathcal{R}_{p}M$-supermodule.
\end{enumerate}
We refer the reader to \cite{adolfo-bundles} for a comprehensive study of 
supervector bundles. Do note that the objects studied there are quite more 
general than what we have called a supervector bundle; for our purposes, we 
need only consider the supermodule structure on the space of sections of 
$E$---and not a supermanifold structure on its total space---, so our 
definition is general enough, but the reader should bear in mind the scope of 
the usual definition of a supervector bundle. The space of sections \emph{over 
$M$} of $E$ is then a 
$\sect{\mathcal{R}M}$-supermodule. We will consider a special kind of these:

\begin{defn}
	Let $\mathcal{A}$ be a supercommutative algebra. A \textbf{standard (left) supermodule} 
	is a bilateral supermodule $N$ such that $an=(-1)^{\parity{a}\parity{n}}na$ for any 
	homogeneous $a\in\mathcal{A}$ and $n\in{N}$.
\end{defn}
There is of course an analogous definition of a right standard supermodule. 
Throughout this work, the symbol $\parity{\cdot}$ denotes the parity of a 
homogeneous element of a superalgebra. It will be tacitly assumed that all 
supervector bundles arise from standard supermodules. In particular we adopt 
the convention that the module of supervector fields is a left standard module 
(over the superalgebra $\mathcal{A}=\sect{\mathcal{R}M}$), whereas the space of 
superforms $\Omega\superfold{M}$ is a bigraded superalgebra (with bigrade over 
$\Z_{2}\times\Z$) and a right standard module; we will refer to the 
$\Z_2$-grading as the \emph{parity} of a bihomogeneous superform, and to its 
$\Z$-grading as the \emph{degree}.

We are interested mostly on the supermanifolds whose smooth superfunctions are 
the differential forms on ordinary manifolds, that is where 
$\mathcal{R}M=\Lambda{T}^{*}M$.

\begin{defn}
	The supermanifold $\supermap{M}{\Lambda{T^*}M}$ is the \textbf{cotangent 
	supermanifold} on $M$. We will denote it by $\supermap{M}{TM}$ (cf. 
	\cite[Section 1.4]{guajardo} for this notation and its meaning).
\end{defn}

In what follows, in order to save space, we shall denote a supermanifold by 
$\mathcal{M}=\superfold{M}$, its tangent superbundle by $T\mathcal{M}$, and a supervector 
bundle simply by $E$. 

\begin{defn}\label{def:salgebroide}
	Let $\mathcal M$ be a supermanifold and $E$ a supervector bundle over $\mathcal M$. A 
	\textbf{superalgebroid structure}
	on $E$ is an even bundle map $\rho : \Gamma(E) \rightarrow \Gamma(T\mathcal{M})\simeq 
	\Der(\Gamma(\mathcal{R}M))$ (the \emph{anchor}) and a
	bilinear operation $\lsem\cdot, \cdot\rsem : \Gamma(E) \times \Gamma(E) \rightarrow 
	\Gamma(E)$ (the \emph{bracket}) such that
	\begin{enumerate}
		\item $\lsem\cdot,\cdot\rsem$ gives $\Gamma(E)$ the structure of a Lie superalgebra, 
		and
		\item $\lsem X, f Y \rsem = \rho(X) (f)  Y + (-1)^{\parity{X}\parity{f}} f \lsem X, 
		Y 
		\rsem$ for any $X, Y \in \Gamma(E)$,	$f \in \sect{\mathcal{R}M}$ 
		(graded 
		Leibniz	
		identity).
	\end{enumerate}
\end{defn}

\begin{exples}
	\begin{enumerate}
		\item The supertangent bundle $T\mathcal{M}$ is a Lie superalgebroid 
		with $\rho=\Id $ and whose brackets is the usual supercommutator of supervector 
		fields.
		\item If $L\colon{T}\mathcal{M}\to{T^*}\mathcal{M}$ is a bundle 
		isomorphism, then $T^{*}\mathcal{M}$ becomes a Lie superalgebroid by 
		defining $\rho=\inv{L}$ and $\lsem\alpha,\beta\rsem=
		L([\inv{L}(\alpha),\inv{L}(\beta)])$, where $[\cdot,\cdot]$ is the same bracket of the previous example. 
	\end{enumerate}
\end{exples}

\section{Poisson-type superalgebroids}\label{sec:pois}
We now turn our attention to special sorts of $\R$-algebras $\mathcal{A}$.

\begin{defn}
	A \textbf{graded Poisson algebra} of degree $k$ $(\mathcal{A}, \{\cdot ,\cdot \})$ 
	is a $\Z$-graded associative and graded-commutative algebra 
	$(\mathcal{A}^{\bullet},\cdot)$ together with 
	an $\mathbb{R}$-bilinear operation, the \textbf{Poisson bracket},
	$\{\cdot ,\cdot \} : \mathcal{A} \times \mathcal{A}\rightarrow \mathcal{A}$ such that
	\begin{enumerate}
		\item $\{\mathcal{A}^{a},\mathcal{A}^{b}\}\subseteq\mathcal{A}^{a+b+k}$ (the bracket 
		has degree $k$);
		\item $\{f, g\} = -(-1)^{(\abs{f}+k)(\abs{g}+k)}\{g, f \}$ (graded skew-symmetry);
		\item $\{f, \{g, h\}\}= \{\{f, g \},h\} +(-1)^{(\abs{f}+k)(\abs{g}+k)} \{g, 
		\{f, 
		h\}\} $ (graded Jacobi identity), and
		\item $\{f, gh\} = \{f, g\}h +(-1)^{(\abs{f}+k)(\abs{g})} g\{f, h\}$ 
		(graded Leibniz identity),
	\end{enumerate}
	for all $f, g, h \in\mathcal{A}$. Here $\abs{\cdot}$ denotes the degree of an element of 
	$\mathcal A$.
\end{defn}

Note that a 
graded-commutative algebra furnishes a supercommutative algebra by defining the parity of a 
$\Z$-homogeneous element as the parity of its degree; in this case, parity and 
degree are 
indistinguishable. 

For any graded algebra $\mathcal{A}$, the 	space $\Omega^{1}(\mathcal{A})$ is 
defined as 
the kernel of the multiplication map 
$\mathbf{m}\colon\mathcal{A}\otimes\mathcal{A}\to\mathcal{A}$; let us define 
\begin{equation}\label{eq:superd}
\begin{split}
\d\colon\mathcal{A}&\to\Omega^{1}(\mathcal{A})\\
a		&\mapsto a\otimes{1}-1\otimes{a}
\end{split}
\end{equation}
and observe that $\Omega^{1}(\mathcal{A})$ is generated by the image of this 
map; also, note that $\d$ is a derivation with values in the latter module, 
which is the dual module to the one of derivations $\Der\mathcal{A}$  with the 
pairing
\begin{equation}\label{eq:pairing}
\langle D,\d{a}\rangle=
D(a)
\end{equation}
(all this 
is in accordance with \cite[p.3]{GVV}).
\begin{obs}
	The module $\mathcal{A}\otimes\mathcal{A}$ is technically a bimodule with 
	left and right multiplications
	\[
	a\otimes(\alpha\otimes\beta)=
	a\alpha\otimes\beta
	\quad\text{and}\quad
	(\alpha\otimes\beta)\otimes{a}=
	\alpha\otimes\beta{a}
	\]
	respectively. Since the definition of $\Omega^{1}(\mathcal A)$ renders it 
	as a submodule of the one above, we can consider it to be a bilateral 
	module on its own, \emph{but a right module} with respect to the pairing 
	\eqref{eq:pairing}. However, since our definition of a superalgebroid 
	implicitly assumes such an object to be a left module, we shall treat 
	$\Omega^{1}(\mathcal A)$ as a left module whenever necessary, considering 
	it as a submodule of $\mathcal{A}\otimes\mathcal{A}$; this is done in order 
	to 	avoid clumsy computations below.
\end{obs}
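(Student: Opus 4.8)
The plan is to verify, in turn, the three assertions packaged into this remark: that the stated formulas make $\mathcal{A}\otimes\mathcal{A}$ an $\mathcal{A}$-bimodule, that $\Omega^{1}(\mathcal{A})$ is a sub-bimodule of it, and that the duality pairing \eqref{eq:pairing} is compatible with the \emph{right} $\mathcal{A}$-action, so that $\Omega^{1}(\mathcal{A})$ is ``naturally'' right while the left structure we shall actually use is the one it inherits as a submodule. None of this is deep, but it requires care with which of several $\mathcal{A}$-actions is in play at each moment.

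First I would check that $a\cdot(\alpha\otimes\beta)=a\alpha\otimes\beta$ and $(\alpha\otimes\beta)\cdot a=\alpha\otimes\beta a$ are indeed a left and a right $\mathcal{A}$-action — associativity and unitality being inherited from $\mathcal{A}$ — and that they commute, since $a\cdot\bigl((\alpha\otimes\beta)\cdot b\bigr)=a\alpha\otimes\beta b=\bigl(a\cdot(\alpha\otimes\beta)\bigr)\cdot b$; hence $\mathcal{A}\otimes\mathcal{A}$ is a bimodule. Then, because $\mathbf{m}(a\cdot u)=a\,\mathbf{m}(u)$ and $\mathbf{m}(u\cdot a)=\mathbf{m}(u)\,a$ for every $u\in\mathcal{A}\otimes\mathcal{A}$, the multiplication $\mathbf{m}$ is a morphism of bimodules, so its kernel $\Omega^{1}(\mathcal{A})$ is a sub-bimodule; in particular it is a left $\mathcal{A}$-module, which is the structure required by Definition~\ref{def:salgebroide}. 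Along the way I would record that $\d$ is an even derivation valued in $\Omega^{1}(\mathcal{A})$ and that $\im\d$ generates it, via $\alpha\otimes\beta=\mathbf{m}(\alpha\otimes\beta)\otimes1-\alpha\cdot\d\beta$; thus every element of $\ker\mathbf{m}$ is a finite left-combination $\sum_{i}\alpha_{i}\cdot\d\beta_{i}$, which is exactly what lets \eqref{eq:pairing} determine the pairing.

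For the pairing itself, I would note that each $D\in\Der\mathcal{A}$ extends to a linear map $\widetilde{D}\colon\mathcal{A}\otimes\mathcal{A}\to\mathcal{A}$ (built from $\alpha\otimes\beta\mapsto\pm(-1)^{\parity{D}\parity{\alpha}}\alpha\,D(\beta)$, with the overall sign chosen so that $\widetilde{D}(\d a)=D(a)$); restricting $\widetilde{D}$ to $\Omega^{1}(\mathcal{A})$ shows at once that \eqref{eq:pairing} is well defined on all of $\Omega^{1}(\mathcal{A})$ — independently of how an element is written as $\sum\alpha_{i}\otimes\beta_{i}$ — and reproduces the duality with $\Der\mathcal{A}$ already recalled in the text. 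Since $\Der\mathcal{A}$ is a left standard module, the pairing is $\mathcal{A}$-linear in the sense $\langle aD,\omega\rangle=a\langle D,\omega\rangle$ and $\langle D,\omega\cdot a\rangle=\langle D,\omega\rangle a$, which forces the $\mathcal{A}$-action compatible with \eqref{eq:pairing} to be on the right; a short check on the generators $\d a$, comparing $(\d a)\cdot f$ with the action singled out by $\langle D,(\d a)\cdot f\rangle=D(a)f$, identifies this right action with the restriction of the right multiplication on $\mathcal{A}\otimes\mathcal{A}$. The one genuinely delicate point — and the main obstacle — is the mismatch between structures: the left action $a\cdot(\alpha\otimes\beta)=a\alpha\otimes\beta$ inherited from $\mathcal{A}\otimes\mathcal{A}$ is \emph{not} the Koszul flip of the above right action, because the tensor product is over $\R$ and scalars cannot be carried across $\otimes$, so $\Omega^{1}(\mathcal{A})$ with this left structure is not a standard module. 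I would therefore close by emphasising that adopting the submodule left structure is a deliberate convention — legitimate because $\Omega^{1}(\mathcal{A})$ is still a genuine left $\mathcal{A}$-module, which is all Definition~\ref{def:salgebroide} requires — chosen precisely to bypass the sign bookkeeping that the standard-module flip would otherwise inject into the computations that follow.
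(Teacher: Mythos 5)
The statement you were given is a convention-setting remark, and the paper supplies no argument for it at all; your write-up therefore cannot be compared step-by-step with anything in the text, but as a verification it is correct and fills in exactly the details the authors leave tacit. The bimodule axioms for $a\cdot(\alpha\otimes\beta)=a\alpha\otimes\beta$ and $(\alpha\otimes\beta)\cdot a=\alpha\otimes\beta a$, the observation that $\mathbf{m}$ is a morphism of bimodules so that $\Omega^{1}(\mathcal{A})=\ker\mathbf{m}$ is a sub-bimodule (hence in particular the left module required by Definition~\ref{def:salgebroide}), the identity $\alpha\otimes\beta=\mathbf{m}(\alpha\otimes\beta)\otimes 1-\alpha\cdot\d\beta$ showing that $\im\d$ generates the kernel, the extension of a derivation $D$ to $\mathcal{A}\otimes\mathcal{A}$ to see that \eqref{eq:pairing} is well defined, and the computation $\langle D,(\d a)\cdot f\rangle=D(a)f$ identifying the right action as the one compatible with the pairing, are all sound.

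The one place where you go beyond the remark is your closing claim that the inherited left action is not the Koszul flip of the right action, so that $\Omega^{1}(\mathcal{A})$ with the submodule structure is not a standard module. For $\ker\mathbf{m}\subset\mathcal{A}\otimes_{\R}\mathcal{A}$ as literally defined this is true: the discrepancy between $a\cdot\d b$ and $(-1)^{\parity{a}\parity{\d b}}(\d b)\cdot a$ is, up to sign, the product $\d a\,\d b$ taken inside $\mathcal{A}\otimes\mathcal{A}$, a generally nonzero element of $(\ker\mathbf{m})^{2}$ (already for $\mathcal{A}=\R[x,y]$). Be aware, however, that the paper later identifies $\Omega^{1}(\mathcal{A})$ with the $1$-superforms on $\mathcal{M}$, which it declares to be a right \emph{standard} module; that identification tacitly uses more than the bare kernel (morally, the quotient by $(\ker\mathbf{m})^{2}$, or a suitably completed tensor product). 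So your observation is a legitimate sharpening of a tension already present in the paper, not an error in your argument, and it explains rather than contradicts the authors' decision to fix the left structure by fiat ``to avoid clumsy computations.''
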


Let us note, 
however, that since we shall 
be working with $\mathcal{A}=\sect{\mathcal{R}M}$, the space 
$\Omega^{1}(\mathcal{A})$ is in 
fact the space of $1$-superforms on $\mathcal M$ and the map $\d$ is 
simply the 
exterior superderivative, which is even and has degree $1$; its definition 
agrees with the above.

\begin{exples}\label{ex:ejemplos}\hspace{1pt}
\begin{enumerate}
	\item Any Poisson algebra in the classical sense is a graded Poisson algebra whose 
	bracket has degree $0$.
 	\item\label{ex:euclidean} If $(V,g)$ is a pseudoeuclidean vector space then defining on 
	the 
	exterior algebra 
	$\Lambda V$ the bracket $\{\cdot,\cdot\}$ by the relations
	\begin{enumerate}
		\item $\{1,\cdot\}=0$;
		\item $\{x,y\}=g(x,y)$ for $x$ and $y$ in $V$, and
		\item $\{x,y\wedge z\}=\{x,y\}z-\{x,z\}y$ for all $x,y$ and $z$ in $V$,
	\end{enumerate}
	we obtain a graded Poisson algebra of degree $-2$.
	\item If $(M,(\cdot,\cdot))$ is a Poisson manifold, then by defining on 
	$\mathcal{A}=\Omega(M)$ 
	the bracket $\{\cdot,\cdot\}$ by the relations
	\begin{enumerate}
		\item $\{a,b\}=0$ for $a,b\in\Omega^{0}(M)$;
		\item $\{{d}a,b\}=\{a,{d}b\}=(a,b)$ for $a,b\in\Omega^{0}(M)$;
		\item $\{{d}a,{d}b\}={d}(a,b)$, and
		\item $\{\alpha,\beta\wedge\gamma\}=\{\alpha,\beta\}\wedge\gamma + 
		(-1)^{(\abs{\alpha}-1)\abs{\beta}}\beta\wedge\{\alpha,\gamma\}$ for all 
		$\alpha,\beta,\gamma$ in $\Omega^{1}(M)$,
	\end{enumerate}
	we obtain a graded Poisson algebra on $\mathcal{A}$ of degree $-1$. This 
	bracket was completely characterised in \cite{BM}.
\end{enumerate}	
\end{exples}

If $(\mathcal{A},\{\cdot,\cdot\})$ is graded Poisson algebra of degree $k$, we can define an 
$\mathcal{A}$-module morphism $\Omega^1(\mathcal{A})\rightarrow \Der (\mathcal{A})$ by 
$\rho(\d{f})=\{f,\cdot\}$ for $f\in\mathcal{A}$ (where $\mathbf d$ is 
defined by \eqref{eq:superd}),  and extending by 
$\mathcal{A}$-linearity. 
This map furnishes a derivation of degree $k$ on $\mathcal A$.

We can also define a bracket 
$\lsem\cdot,\cdot\rsem:\Omega^1(\mathcal{A})\times\Omega^1(\mathcal{A})\rightarrow 
\Omega^1(\mathcal{A})$ by 
\begin{eqnarray}
\lsem\alpha,\beta\rsem=\rho(\alpha) \ins \d\beta 
-(-1)^{(\parity{\alpha}+k)(\parity{\beta}+k)} \rho(\beta) \ins 
\d\alpha + \d(\beta(\rho(\alpha)))
\end{eqnarray}
where $\ins$ denotes interior product and which satisfies
\begin{enumerate}
	\item 
	$\lsem\alpha,\beta\rsem=-(-1)^{(\parity{\alpha}+k)(\parity{\beta}+k)}
	\lsem\beta,\alpha\rsem$
	\item 
	$\lsem\alpha,\lsem\beta,\gamma\rsem\rsem=
	\lsem\lsem\alpha,\beta\rsem,\gamma\rsem+
	(-1)^{(\parity{\alpha}+k)(\parity{\beta}+k)}\lsem\beta,\lsem\alpha,\gamma\rsem\rsem$
	\item 
	$\lsem\alpha,f\beta\rsem=
	\rho(\alpha)(f)\beta+
	(-1)^{(\parity{\alpha}+k)\parity{f}}f\lsem\alpha,\beta\rsem$
\end{enumerate}
for all $\alpha,\beta\in \Omega^1(\mathcal{A})$ and $f\in\mathcal{A}$, which 
can 
be verified with direct computations.

\begin{defn}\label{def:sPoisson}
When $(\mathcal{A}, \{\cdot ,\cdot \})$	is a graded Poisson algebra of even degree, then 
$(\mathcal{A}, \{\cdot ,\cdot \})$ is called an \textbf{Poisson 
superalgebra}.
\end{defn}

Note that if $k$ is even then $\parity{\parity{a}+k}=\parity{a}$ for all 
$\alpha$ in $\Omega^{1}(\mathcal{A})$, so it suffices to consider the case
$k=0$.

\begin{teo}\label{teo:algebra} 
If $(\mathcal{A}, \{ \cdot,\cdot\})$ is a Poisson superalgebra, then $(\Omega ^1 
(\mathcal{A}), \lsem\cdot ,\cdot \rsem, \rho)$ is a Lie superalgebroid with anchor map 
$\rho$ 
defined by $\rho(\d{f})=\{f,\cdot\}$ for $f\in \mathcal{A}$ and 
bracket defined 
by 
\begin{equation}\label{def:corchete}
	\lsem\alpha,\beta\rsem=\rho(\alpha) \ins \d\beta 
	-(-1)^{\parity{\alpha}\parity{\beta}} \rho(\beta) \ins 
	\d\alpha + \d(\beta(\rho(\alpha)))
\end{equation}
for every $\alpha, \beta\in \Omega ^1(\mathcal{A})$.
\end{teo}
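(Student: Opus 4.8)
The plan is to reduce all three axioms of a Lie superalgebroid to the exact $1$-superforms $\d f$, which generate $\Omega^{1}(\mathcal{A})$ as an $\mathcal{A}$-module, and then to read them off from the corresponding Poisson axioms. Throughout I use, as the remark preceding the statement permits, that it suffices to treat $k=0$, so that parity and degree agree on $\mathcal{A}$ and the signs in \eqref{def:corchete} are the ones displayed.

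First I would check that the data are of the right type. We take $E$ to be the supervector bundle whose sections form $\Omega^{1}(\mathcal{A})$ (concretely $T^{*}\mathcal{M}$ in the geometric case $\mathcal{A}=\sect{\mathcal{R}M}$, regarded as a left standard module as in the remark following \eqref{eq:pairing}), and the target of the anchor is $\Der(\mathcal{A})\simeq\Gamma(T\mathcal{M})$. The only point needing care is that $\rho$ is well defined: by the graded Leibniz and Jacobi identities, $f\mapsto\{f,\cdot\}$ is a derivation of $\mathcal{A}$ with values in the $\mathcal{A}$-module $\Der(\mathcal{A})$, so it factors uniquely through the universal derivation $\d$ and yields an $\mathcal{A}$-linear map $\rho$ with $\rho(\d f)=\{f,\cdot\}$; since the Poisson degree is even, $\rho$ is even, hence a legitimate anchor.

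The heart of the matter is the identity $\lsem\d f,\d g\rsem=\d\{f,g\}$ for $f,g\in\mathcal{A}$. It follows at once from \eqref{def:corchete}: the first two summands vanish because $\d\circ\d=0$, while the third equals $\d(\langle\rho(\d f),\d g\rangle)=\d(\rho(\d f)(g))=\d\{f,g\}$ by \eqref{eq:pairing}. Granting this, I would verify the axioms as follows. First, graded skew-symmetry: a direct expansion of \eqref{def:corchete} gives $\lsem\alpha,\beta\rsem+(-1)^{\parity{\alpha}\parity{\beta}}\lsem\beta,\alpha\rsem=\d\bigl(\langle\rho(\alpha),\beta\rangle+(-1)^{\parity{\alpha}\parity{\beta}}\langle\rho(\beta),\alpha\rangle\bigr)$, so skew-symmetry amounts to the graded skew-adjointness of $\rho$ for the pairing \eqref{eq:pairing}, which holds on exact generators because $\{f,g\}=-(-1)^{\parity{f}\parity{g}}\{g,f\}$ and propagates by $\mathcal{A}$-bilinearity. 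Second, the graded Leibniz identity is property (3) in the list preceding the statement; it is checked directly from \eqref{def:corchete} using $\mathcal{A}$-linearity of $\rho$, the Leibniz rule for $\ins$, and the graded derivation property of $\d$. Third, the graded Jacobi identity: on exact generators the three terms $\lsem\d f,\lsem\d g,\d h\rsem\rsem$, $\lsem\lsem\d f,\d g\rsem,\d h\rsem$, $\lsem\d g,\lsem\d f,\d h\rsem\rsem$ become $\d\{f,\{g,h\}\}$, $\d\{\{f,g\},h\}$, $\d\{g,\{f,h\}\}$, so the Jacobi identity for $\lsem\cdot,\cdot\rsem$ is exactly $\d$ applied to the Poisson Jacobi identity; to pass to arbitrary arguments I would show the Jacobiator is $\mathcal{A}$-multilinear in each slot, which uses the Leibniz rule, skew-symmetry, and the anchor homomorphism property $\rho\lsem\alpha,\beta\rsem=[\rho(\alpha),\rho(\beta)]$ — and the latter, on exact generators, reduces once more (via the key identity) to the Poisson Jacobi identity, then extends by $\mathcal{A}$-linearity of $\rho$ together with the Leibniz rule for the supercommutator of supervector fields.

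I expect the main obstacle to be the verification that the Jacobiator of \eqref{def:corchete} is genuinely tensorial in each argument, which is what licenses the reduction to exact generators; this step interlocks the Leibniz rule, graded skew-symmetry and the anchor homomorphism and demands careful tracking of the signs arising from both the $\Z_{2}$-parity and the $\Z$-degree of superforms. A more computational alternative is to rewrite $\lsem\alpha,\beta\rsem$ as a Koszul-type bracket $\mathcal{L}_{\rho(\alpha)}\beta-(-1)^{\parity{\alpha}\parity{\beta}}\ins_{\rho(\beta)}\d\alpha$, with $\mathcal{L}_{X}=\ins_{X}\d+\d\,\ins_{X}$, and to invoke the super Cartan calculus; this relocates rather than removes the bookkeeping.
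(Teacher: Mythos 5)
Your proposal is correct and follows essentially the same route as the paper, whose proof is simply the assertion that all axioms can be checked directly from \eqref{def:corchete}, with the Jacobi identity verified by writing elements of $\Omega^{1}(\mathcal{A})$ as $\mathcal{A}$-linear combinations of generators $\d{f_i}$. Your write-up merely fills in the details of that same strategy (the key identity $\lsem\d{f},\d{g}\rsem=\d\{f,g\}$, tensoriality of the Jacobiator via the anchor-morphism property), so no substantive difference to report.
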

\begin{proof}
All the required properties can be checked directly from the definition 
of the bracket 
\eqref{def:corchete}.
To verify the Jacobi identity, it is easier to consider a system $\set[i\in 
I]{\d{f_i}}$ of 
generators of $\Omega^1(\mathcal{A})$, and to express the elements $\alpha \in 
\Omega^1(\mathcal{A})$ as $\alpha=\sum_{i}{g_i}(\d{f_i})$ for $g_i \in 
\mathcal{A}$.
\end{proof}


\begin{exple}
If $(M, g)$ is a Riemannian manifold, we can define 
\begin{align}
\{\phi,\cdot\}&=0\\
\{\mu,\nu\}&=g(\mu^{\sharp},\nu^{\sharp})\\
\{\mu,\nu\wedge\xi\}&=g(\mu^{\sharp},\nu^{\sharp})\xi-g(\mu^{\sharp},\xi^{\sharp})\nu
\end{align} 
for $\phi\in \C(M)$, $\mu,\nu, \xi\in \Omega^1(M) $; here, $\alpha^\sharp$ is the vector field associated to a $1$-form $\alpha$ via the \emph{musical isomorphism} induced by $g$.
It is easy to see that $ \{\cdot, \cdot\} $ defines a structure of 
graded Poisson algebra of degree $ -2 $ on $ \mathcal{A}=\Omega (M)$ (in fact,
\ref{ex:euclidean} of \ref{ex:ejemplos} is a particular case of this example, when $ M = \{* 
\} $ and $ (M, g) $ 
is a pseudoriemannian manifold). Then, by Theorem \ref{teo:algebra}, $(\Omega ^1 
(\mathcal{A}), \lsem\cdot ,\cdot \rsem, \rho)$ is a Lie superalgebroid with anchor map 
$\rho$  defined by $\rho(\d{f})=\{f,\cdot\}$ for $f\in \mathcal{A}$ and 
bracket defined by \eqref{def:corchete} for $\alpha,\beta\in \Omega^1(\mathcal{A})$.

More explicitly, if $\alpha=f_{1}\d{h_1}$ and $\beta=f_{2}\d{h_2}$ with 
$f_1,f_2,h_1,h_2\in\mathcal{A}$, then
\begin{equation}\label{eq:ejRiemann}
\begin{split}
\lsem\alpha,\beta\rsem
&=f_1\{ 
h_1,f_2\}\d{h_2}-
(-1)^{\parity{\alpha}\parity{\beta}}f_2\{ 
h_2,f_1\}\d{h_1}\\
&-(-1)^{\parity{\alpha}\parity{\beta}+\parity{h_2}\parity{f_1}}f_2
 f_1\d\{h_2,h_1\}
\end{split}
\end{equation}
\end{exple}

Theorem \ref{teo:algebra} tells us that given a Poisson superalgebra we have a Lie 
superalgebroid naturally associated to it. We are interested now in the 
reciprocal question: When 
does a Lie superalgebroid $(\Omega^1(\mathcal{A}), \lsem \cdot,\cdot \rsem, \rho)$
determine a Poisson superalgebra on $\mathcal{A}$?

In order to answer this question, let  us consider a Lie superalgebroid 
$(\Omega^1(\mathcal{A}), \lsem \cdot,\cdot \rsem, \rho)$; for any 
$f, h \in \mathcal{A}$ we can define
\begin{equation}\label{eq:corcheteP}
\{f, h\} = (\d{h})(\rho(\d{f} )) = (\rho(\d{f} ))(h),
\end{equation}
which is clearly $\mathbb{R}$-bilinear and 
satisfies the Leibniz identity
\begin{equation*}
\begin{split}
\{f, hh'\}& = (\rho(\d{f} ))(hh') = \rho(\d{f} )(h)h' 
+(-1)^{\parity{f}\parity{h}}h\rho(\d{f} )(h') \\
& = \{f, h\}h' + (-1)^{\parity{f}\parity{h}} h\{f, h'\},
\end{split}
\end{equation*}

for all $f, h, h' \in \mathcal{A}$ homogeneous.

Thus, in order to get a Poisson superalgebra on $\mathcal{A}$ we only need to take care of 
the skew-supersymmetry and the graded Jacobi identity.

\begin{defn} 
The anchor map $\rho$ of a Lie superalgebroid $(\Omega^1(\mathcal{A}), 
\lsem\cdot,\cdot\rsem,\rho)$ is said to be \textbf{skew-supersymmetric} if
\begin{equation}\label{eq:superskew}
\alpha(\rho(\beta)) = -(-1)^{\parity{\alpha}\parity{\beta}}\beta(\rho(\alpha))
\end{equation}
for all $\alpha, \beta\in \Omega^1(\mathcal{A})$.
\end{defn}

Observe that if the anchor map is skew-supersymmetric, then the new operation 
defined by \eqref{eq:corcheteP} is also skew-supersymmetric, so henceforth we 
will assume $\rho$ to satisfy \eqref{eq:superskew}. Let us now turn our 
attention to 
the graded Jacobi identity.

\begin{teo}\label{teo:sn}
Let $(\Omega^1(\mathcal{A}), \lsem \cdot,\cdot \rsem, \rho)$ be a Lie superalgebroid, and 
define $Q \in \Lambda ^ 2 (\Der(\mathcal{A}))$ by $Q(\d{f}, \d{g}) 
=\d{g}(\rho(\d{f} ))$ for $f, g \in 
\mathcal{A}$. The following conditions are equivalent:
\begin{enumerate}
\item\label{1} the operation defined by $\{f, g\} = \rho(\d{f} )(g)$ 
satisfies the Jacobi identity,
\item\label{2} $[Q, Q]_{\operatorname{SN}} = 0$,
\item\label{3} $[\rho(\d{f} ), \rho(\d{g})] = \rho(\d\{f, g\})$, where 
$\{\cdot,\cdot\}$ is defined in \ref{1}.
\end{enumerate}
Here, $[\cdot, \cdot]_{SN}$ denotes the generalisation of the 
Schouten–Nijenhuis bracket to elements of the bigraded algebra 
$\Lambda\Der\mathcal{A}$.
\end{teo}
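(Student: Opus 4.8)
The plan is to prove the three equivalences by establishing the cycle $\eqref{1}\Rightarrow\eqref{3}\Rightarrow\eqref{2}\Rightarrow\eqref{1}$, exploiting throughout the fact that $\Omega^1(\mathcal{A})$ is generated (as an $\mathcal{A}$-module) by exact superforms $\d f$, so that all identities need only be checked on such generators and then extended by the Leibniz rules already at our disposal.

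\medskip

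First I would record the elementary observations. Since $\rho$ is an even $\mathcal{A}$-module morphism and the bracket $\lsem\cdot,\cdot\rsem$ on a \emph{Lie} superalgebroid satisfies the Jacobi and Leibniz identities of Definition~\ref{def:salgebroide}, a standard computation shows that $\rho$ is a morphism of Lie superalgebras whenever the relevant bracket on its image is well-defined; concretely, from the graded Leibniz identity one deduces $[\rho(\alpha),\rho(f\beta)]=\rho(\lsem\alpha,f\beta\rsem)$ up to the ambiguity of $\rho$ on non-generator elements. This is precisely the content that makes $\eqref{3}$ the natural bridge. For $\eqref{1}\Rightarrow\eqref{3}$: evaluating both sides of the putative identity $[\rho(\d f),\rho(\d g)]=\rho(\d\{f,g\})$ on an arbitrary $h\in\mathcal{A}$ gives, on the left, $\rho(\d f)(\rho(\d g)(h)) - (-1)^{\parity{f}\parity{g}}\rho(\d g)(\rho(\d f)(h)) = \{f,\{g,h\}\} - (-1)^{\parity{f}\parity{g}}\{g,\{f,h\}\}$, and on the right $\{\{f,g\},h\}$; these agree exactly by the Jacobi identity assumed in~\eqref{1}. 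Since derivations of $\mathcal{A}$ are determined by their values, $\eqref{3}$ follows. The converse $\eqref{3}\Rightarrow\eqref{1}$ is the same chain of equalities read backwards.

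\medskip

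For the link with the Schouten--Nijenhuis bracket, I would use the standard formula for $[Q,Q]_{\operatorname{SN}}$ with $Q\in\Lambda^2(\Der\mathcal{A})$: for a bivector $Q$ the element $\tfrac{1}{2}[Q,Q]_{\operatorname{SN}}\in\Lambda^3(\Der\mathcal{A})$ evaluated on $\d f,\d g,\d h$ is (up to the usual Koszul signs coming from the degree shift) a cyclic sum of terms of the form $\rho(\d f)\bigl(Q(\d g,\d h)\bigr) - Q(\d\{f,g\},\d h)$, i.e.\ of $\{f,\{g,h\}\} - \{\{f,g\},h\}$ and its graded cyclic permutations. Hence $[Q,Q]_{\operatorname{SN}}=0$ is literally the graded Jacobiator of $\{\cdot,\cdot\}$ vanishing on generators, which by the already-verified Leibniz identity for $\{\cdot,\cdot\}$ propagates to all of $\mathcal{A}$. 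This gives $\eqref{2}\Leftrightarrow\eqref{1}$ directly, and closes the cycle. The only subtlety worth flagging explicitly is bookkeeping: one must confirm that $Q$ is genuinely well-defined and graded skew-symmetric, which uses the assumed skew-supersymmetry \eqref{eq:superskew} of $\rho$, and one must be careful that the degree-shift conventions in the super Schouten--Nijenhuis bracket on $\Lambda\Der\mathcal{A}$ match the parity conventions in Definition~\ref{def:sPoisson} (this is where $k=0$, i.e.\ the even-degree reduction noted before the theorem, is used).

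\medskip

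The main obstacle I anticipate is not conceptual but notational: getting every Koszul sign in the super Schouten--Nijenhuis bracket to line up with the signs in the graded Jacobi identity of a degree-$0$ Poisson superalgebra. In the classical (ungraded) case this computation is routine and well-documented (cf.\ \cite{ksm}); the supergeometric version requires re-deriving the bracket formula on $\Lambda^\bullet\Der\mathcal{A}$ with the parity of a derivation taken into account, and then checking that the single scalar identity ``Jacobiator $=0$ on $\d f,\d g,\d h$'' is equivalent to ``the $\Lambda^3$-component of $[Q,Q]_{\operatorname{SN}}$ vanishes.'' Once the sign conventions are fixed, the proof is a short chain of substitutions; I would therefore present it by first stating the evaluation formula for $[Q,Q]_{\operatorname{SN}}$ on generators as a lemma (or inline, since it is standard), and then running the three implications as above.
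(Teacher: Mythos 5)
Your proposal is correct and follows essentially the same route as the paper: the equivalence of conditions (1) and (3) is obtained exactly as in the paper, by evaluating $[\rho(\d f),\rho(\d g)]$ and $\rho(\d\{f,g\})$ on an arbitrary $h\in\mathcal{A}$ and invoking the graded Jacobi identity, while the equivalence of (1) and (2) rests on the standard identification of $[Q,Q]_{\operatorname{SN}}$ evaluated on $\d f,\d g,\d h$ with the graded Jacobiator of $\{\cdot,\cdot\}$, which the paper simply cites (from Azc\'arraga, Equation (3.8)) and you propose to state as a lemma with the Koszul signs checked. Your remarks on the skew-supersymmetry of $\rho$ (needed for $Q\in\Lambda^2(\Der\mathcal{A})$ to be well-defined) and on the $k=0$ reduction are consistent with the paper's standing assumptions.
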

\begin{proof}
For the equivalence of \ref{1} and \ref{2} see \cite[Equation 
(3.8)]{Azcarraga}).

Suppose that the bracket $\{\cdot,\cdot \}$ satisfy the graded Jacobi identity, that is,
\begin{equation*}
\{f,\{g,h\}\}=\{\{f,g\},h\}+(-1)^{\parity{f}\parity{g}}\{g,\{f,h\}\}
\end{equation*}
for all $f,g\in\mathcal{A}$. With this,
\begin{equation*}
\begin{split}
[\rho(\d{f}),\rho(\d{g})](h)&
=\rho(\d{f})(\rho(\d{g})(h))-
(-1)^{\parity{f}\parity{g}}\rho(\d{g})(\rho(\d{f})(h))\\
&=\{f,\{g,h\}\}-(-1)^{\parity{f}\parity{g}}\{g,\{f,h\}\}\\
&=\{\{f,g\},h\}\\
&=\rho(\d\{f,g\})(h).
\end{split}
\end{equation*}
Now suppose that \ref{3} is true, then
\begin{equation}\label{nada}
\begin{split}
\{\{f,g\},h\}&=\rho(\d\{f,g\})(h)=[\rho(\d{f}),\rho(\d{g})](h)\\
&=\{f,\{g,h\}\}-(-1)^{\parity{f}\parity{g}}\{g,\{f,h\}\}
\end{split}
\end{equation}
 \end{proof}

This result motivates the following:
\begin{defn}
	A Lie superalgebroid $(\Omega^1(\mathcal{A}), \lsem \cdot,\cdot \rsem, \rho)$ is of 
	\textbf{Poisson 
	type} if:
	\begin{enumerate}
	\item the anchor map $\rho$ is skew-supersymmetric, in the sense of 
	\eqref{eq:superskew}; and
	\item  both the bracket and anchor map satisfy any of the conditions of 
	Theorem \ref{teo:sn}.
	\end{enumerate}
\end{defn} 

In other words, a Lie superalgebroid $(\Omega^1(\mathcal{A}), \lsem \cdot,\cdot \rsem, 
\rho)$ is of Poisson type if it determines a Poisson superalgebra on $\mathcal{A}$.

Another important issue for us is to determine the form of the bracket of a Lie 
superalgebroid of
Poisson type. We know \cite[Example 2.4.5]{Mehta} that the classical example of Poisson 
supermanifolds 
leads to brackets of the type

\begin{equation*}
\pi^\sharp(\alpha) \ins \d\beta-
(-1)^{\parity{\alpha}\parity{\beta }}\pi^\sharp(\beta) \ins 
\d\alpha+
\d(\beta(\rho(\alpha)))
\end{equation*} 
where $\pi$ is a Poisson structure on a supermanifold $\mathcal M$ and 
$\pi^\sharp\colon{T\mathcal{M}}\to{T^{*}\mathcal{M}}$ is the musical morphism 
associated to it. The following result characterises the class of such 
algebroids.

\begin{prop}\label{pro:formacorchete}
Let $(\Omega^1(\mathcal{A}), \lsem \cdot,\cdot \rsem, \rho)$ be a Lie 
superalgebroid with skew-supersymmetric anchor $\rho$, and $\lsem 
\mathbf{d}f, 
\mathbf{d}g\rsem = \d\{f, g\}$;
then the bracket of  the Lie superalgebroid is of the form 
\begin{equation}\label{eq:formacorchete}
\lsem \alpha,\beta \rsem=
\rho(\alpha)\ins\d\beta-
(-1)^{\parity{\alpha}\parity{\beta 
}}\rho(\beta)\ins \d\alpha+
\d(\beta(\rho(\alpha))).
\end{equation}
\end{prop}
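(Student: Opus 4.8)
The plan is to show that the given bracket $\lsem\cdot,\cdot\rsem$ is completely determined by the two hypotheses---skew-supersymmetry of $\rho$ and the normalisation $\lsem\d f,\d g\rsem=\d\{f,g\}$---together with the superalgebroid axioms, and that the right-hand side of \eqref{eq:formacorchete} is one bracket satisfying all of these; since a Leibniz-type identity pins the bracket down on a generating set, the two must coincide. First I would record that $\Omega^1(\mathcal A)$ is generated as an $\mathcal A$-module by the image of $\d$, so it suffices to compute $\lsem\alpha,\beta\rsem$ for $\alpha=f\,\d h$ and $\beta=f'\,\d h'$ with $f,f',h,h'\in\mathcal A$, and then invoke bilinearity and the fact that both sides are determined by their values on such elements.

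Next I would expand the left-hand side using the graded Leibniz identity of Definition~\ref{def:salgebroide}(2) twice: once to pull $f$ out of the first slot (using skew-supersymmetry to move it past the bracket, picking up the appropriate sign and an anchor term $-(-1)^{\dots}\rho(f'\,\d h')(f)\,\d h$ or similar), and once to pull $f'$ out of the second slot. This reduces everything to the single value $\lsem\d h,\d h'\rsem$, which by hypothesis equals $\d\{h,h'\}$. Collecting the anchor terms and the $\d$-terms and using $\rho(f\,\d h)=f\rho(\d h)$ together with $\rho(\d h)(h')=\{h,h'\}$ (from \eqref{eq:corcheteP}), I expect to land exactly on the expansion of the right-hand side of \eqref{eq:formacorchete}; in fact the example computation \eqref{eq:ejRiemann} already exhibits the target formula in this generator form, so the bookkeeping is essentially the same as there, and one checks that $\rho(\alpha)\ins\d\beta$, $\rho(\beta)\ins\d\alpha$ and $\d(\beta(\rho(\alpha)))$ reassemble the collected terms once the interior products are written out on $\beta=f'\,\d h'$.

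Alternatively, and perhaps more cleanly, I would verify directly that the formula on the right of \eqref{eq:formacorchete} defines a bracket satisfying the superalgebroid Leibniz rule and reducing to $\d\{f,g\}$ on exact generators---this is immediate from $\rho(\d f)\ins\d(\d g)=0$ (since $\d$ is square-zero as the exterior superderivative, or since $\d g$ is a $1$-form so $\d\d g$ pairs trivially in the relevant way) and $\d(\d g(\rho(\d f)))=\d\{f,g\}$---and then appeal to a uniqueness statement: any two $\mathbb R$-bilinear operations on $\Omega^1(\mathcal A)$ that agree on the generating set $\{\d f\}$ and both satisfy the Leibniz identity \emph{with the same anchor} must coincide. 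The uniqueness itself follows from the same two-step Leibniz expansion, run abstractly.

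The main obstacle will be the sign bookkeeping: each time the Leibniz rule is applied, the parity of $f$ (or $f'$) interacts with the parities of $h$, $h'$ and of the various subexpressions, and one must also be careful that $\parity{\alpha}=\parity{f}+\parity{\d h}=\parity{f}+\parity{h}$ since $\d$ is even, so the sign $(-1)^{\parity{\alpha}\parity{\beta}}$ in \eqref{eq:formacorchete} unpacks into several cross-terms that have to match those produced by the two applications of the Leibniz identity and by moving scalars across the skew-supersymmetric anchor. I would organise this by writing $\parity{\alpha}=\parity{f}+\parity{h}$ and $\parity{\beta}=\parity{f'}+\parity{h'}$ throughout and comparing coefficients of each of the four resulting monomials in $f,f',\d h,\d h',\d\{h,h'\}$ separately. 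No genuinely hard step is expected beyond this; the result is, as the text notes, the supergeometric analogue of a classical fact.
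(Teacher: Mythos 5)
Your main route is exactly the paper's argument: reduce to decomposable elements $\alpha=f_1\d{g_1}$, $\beta=f_2\d{g_2}$, expand $\lsem\alpha,\beta\rsem$ by the graded Leibniz identity in both slots (via skew-symmetry), insert the hypothesis $\lsem\d{f},\d{g}\rsem=\d\{f,g\}$, and match the resulting terms---with the anchor's skew-supersymmetry handling the sign cancellations---against the expansion of $\rho(\alpha)\ins\d\beta-(-1)^{\parity{\alpha}\parity{\beta}}\rho(\beta)\ins\d\alpha+\d(\beta(\rho(\alpha)))$. The uniqueness-based alternative you sketch is a harmless repackaging of the same computation, so no substantive difference from the published proof.
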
 
\begin{proof}
Since every element of $\Omega^1(\mathcal{A}) $ is an  $\mathcal A$-linear 
combination of elements of the 
form $\d{f_i}$ for some $f_i\in\mathcal{A}$, it is enough to prove the 
statement for 
$\alpha=f_1\d{g_1}$ and $\beta = f_2 \d{g_2}$:
\[
\begin{split}
\lsem\alpha,\beta\rsem&=\lsem f_1\d{g_1},f_2\d{g_2}\rsem\\
&=\rho(f_1\d{g_1})(f_2)\d{g_2}-
(-1)^{\parity{\alpha}\parity{\beta}}f_2\rho(\d{g_2})(f_1)\d{g_1}\\
&\quad 
-(-1)^{\parity{\alpha}\parity{\beta}+\parity{g_2}\parity{f_1}}f_2f_1[\d{g_2},\d{g_1}]
\end{split}
\]
\[
\begin{split}
&=f_1\{g_1,f_2\}\d{g_1}-(-1)^{\parity{\alpha}\parity{\beta}}f_2\{g_2,f_1\}\d{g_1}\\
&\quad+(-1)^{\parity{f_1}\parity{f_2}+\parity{g_1}\parity{f_2}}f_2f_1\d\{g_1,g_2\}\\
&=f_1\{g_1,f_2\}\d{g_1}\\
&\quad+(-1)^{1+\parity{f_2}\parity{g_2}}f_1\{g_1,g_2\}\d{f_2}\\
&\quad-(-1)^{\parity{\alpha}\parity{\beta}}f_2\{g_2,f_1\}\d{g_1}
\end{split}
\]
\[
\begin{split}
&\quad+(-1)^{1+\parity{g_1}\parity{f_2}+
	\parity{f_1}\parity{f_2}+
	\parity{f_1}\parity{g_2}+
	\parity{f_1}\parity{g_1}}f_2\{g_1,g_2\}\d{f_1}\\
&\quad+(-1)^{\parity{f_1}\parity{f_2}+
	\parity{g_1}\parity{f_2}}f_2f_1\d\{g_1,g_2\}\\
&\quad+(-1)^{\parity{f_2}\parity{g_2}}f_1\{g_1,g_2\}\d{f_2}\\
&\quad+(-1)^{\parity{g_1}\parity{f_2}+\parity{f_1}\parity{f_2}+
	\parity{f_1}\parity{g_2}+\parity{f_1}\parity{g_1}}f_2\{g_1,g_2\}\d{f_1}\\
&=\rho(f_1\d{g_1})\ins \d(f_2\d{g_2}) 
-(-1)^{\parity{f_1\d{g_1}}\parity{f_2\d{g_2}}}\rho(f_2\d{g_2})\ins 
\d(f_1\d{g_1})\\
&\quad+\d(f_2\d{g_2}(\rho(f_1\d{g_1})))\\
&=\rho(\alpha)\ins\d\beta-
(-1)^{\parity{\alpha}\parity{\beta}}\rho(\beta)\ins 
\d\alpha+\d(\beta(\rho(\alpha)))\qedhere
\end{split}
\]
\end{proof}

An equivalent statement for the hypothesis of Proposition 
\ref{pro:formacorchete} is:
\begin{prop}
If $(\Omega^1(\mathcal{A}), \lsem \cdot,\cdot \rsem, \rho)$ is a Lie 
superalgebroid with  skew-supersymmetric anchor $\rho$,  then the
following assertions are equivalent:
\begin{enumerate}
\item\label{i} $\lsem \d{f}, \d{g}\rsem = \d(\d{g}(\rho(\d{f}))) = \d\{f, g\}$ 
for all $f, g \in 
\mathcal{A},$
\item\label{ii} $\d\alpha = 0 = \d\beta$ implies $\lsem \alpha, \beta\rsem = 0$ 
for $\alpha, 
\beta \in 
\Omega ^1(\mathcal{A}).$
\end{enumerate}
\end{prop}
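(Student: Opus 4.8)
The plan is to prove the two implications separately, exploiting that $\Omega^1(\mathcal{A})$ is generated over $\mathcal{A}$ by exact forms $\d f$ and that the bracket satisfies the graded Leibniz identity (property~(3) of a superalgebroid).

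For the implication \ref{i}$\Rightarrow$\ref{ii}: since $\d$ is a derivation into $\Omega^1(\mathcal{A})$ and this module is generated by the image of $\d$, the kernel of $\d$ on $\mathcal{A}$ (the "superconstants") together with the behaviour of $\d$ forces any closed $1$-superform to be, up to the relevant generation, of a controlled type; the key observation is that if $\d\alpha=0$ then, writing $\alpha=\sum_i g_i\,\d f_i$, applying $\d$ gives $\sum_i \d g_i\wedge\d f_i=0$, and one wants to conclude $\lsem\alpha,\beta\rsem=0$ whenever $\beta$ is likewise closed. First I would expand $\lsem\alpha,\beta\rsem$ using bilinearity and the Leibniz rule (property~(3)) to reduce to brackets of the form $\lsem\d f,\d g\rsem$ plus terms involving $\rho(\alpha)(\cdot)$ and $\rho(\beta)(\cdot)$; by hypothesis~\ref{i} the former equals $\d\{f,g\}$, and then I would use that $\d\alpha=0$ to kill the anchor-derivative terms — concretely, $\d\alpha=0$ together with skew-supersymmetry of $\rho$ controls $\alpha(\rho(\beta))$, and the whole expression collapses to $\d$ of something that vanishes because both forms are closed. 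I expect this direction to require careful bookkeeping of signs but no conceptual obstacle.

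For the implication \ref{ii}$\Rightarrow$\ref{i}: this is the easier direction. Given $f,g\in\mathcal{A}$, one notes $\d(\d f)=0$ since $\d\circ\d=0$ (the exterior superderivative squares to zero), so both $\d f$ and $\d g$ are closed; hypothesis~\ref{ii} then does not directly apply to $\d f$ itself (which is generally nonzero) — rather, I would argue that for the superalgebroid bracket, property~\ref{ii} forces $\lsem\d f,\d g\rsem$ to lie in the image of $\d$ and to coincide with $\d\{f,g\}$ by comparing anchors. The cleanest route: the bracket $\lsem\d f,\d g\rsem$ always pairs with any derivation $D$ via the superalgebroid axioms in a way determined by $\rho$ and the constants; and the difference $\lsem\d f,\d g\rsem-\d\{f,g\}$ is a closed form whose anchor vanishes, hence by \ref{ii} (applied after checking closedness) it must be zero. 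I would verify $\lsem\d f,\d g\rsem$ is closed by computing $\d\lsem\d f,\d g\rsem$ and using that $\rho$ is a morphism only if \ref{i} holds — so instead I would show closedness directly from the Leibniz identity and $\d^2=0$.

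The main obstacle I anticipate is the first direction: showing that closedness of $\alpha$ and $\beta$ genuinely forces the anchor-type terms $\rho(\alpha)\ins\d\beta$ and $\d(\beta(\rho(\alpha)))$ to cancel or vanish. This requires knowing that a closed $1$-superform behaves, for the purposes of the bracket, like it is "locally exact with constant coefficients", which one extracts from the identity $\sum_i\d g_i\wedge\d f_i=0$; translating that wedge relation into a statement usable inside the superalgebroid bracket — where only $\d$, $\rho$, and interior products appear — is the delicate point, and I would handle it by testing against an arbitrary derivation $D$ and reducing everything to the scalar identity governing $\{\cdot,\cdot\}$, using \eqref{eq:pairing} and the skew-supersymmetry \eqref{eq:superskew}.
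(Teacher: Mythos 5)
Your sketch of the direction (\ref{i})$\Rightarrow$(\ref{ii}) follows the paper's route: skew-supersymmetry of $\rho$ plus $\lsem\d f,\d g\rsem=\d\{f,g\}$ force (this is Proposition \ref{pro:formacorchete}) the explicit formula $\lsem\alpha,\beta\rsem=\rho(\alpha)\ins\d\beta-(-1)^{\parity{\alpha}\parity{\beta}}\rho(\beta)\ins\d\alpha+\d(\beta(\rho(\alpha)))$. But your concluding step is not justified: when $\d\alpha=0=\d\beta$ the two interior-product terms disappear and what remains is $\d(\beta(\rho(\alpha)))$, and nothing forces the superfunction $\beta(\rho(\alpha))$ to vanish or to be killed by $\d$. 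Saying the expression ``collapses to $\d$ of something that vanishes because both forms are closed'' is precisely the point that needs an argument; the paper's own computation at this stage only yields $\d\lsem\alpha,\beta\rsem=\d^{2}(\beta(\rho(\alpha)))=0$, i.e.\ closedness of the bracket of closed forms, which is the form of condition (\ref{ii}) its proof actually uses in the other direction. Note also that, read literally, (\ref{ii}) does apply to the pair $(\d f,\d g)$, since both are exact hence closed; your remark that it ``does not directly apply to $\d f$ itself'' indicates a misreading of what the hypothesis quantifies over.

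The direction (\ref{ii})$\Rightarrow$(\ref{i}) is where the proposal genuinely fails, and it is the harder direction, not the easier one. Condition (\ref{ii}) is a statement about the bracket of two closed $1$-superforms; it is not a statement that a closed $1$-superform with vanishing anchor image must be zero, so ``the difference $\lsem\d f,\d g\rsem-\d\{f,g\}$ is a closed form whose anchor vanishes, hence by (\ref{ii}) it must be zero'' is not an application of (\ref{ii}) at all. Moreover, vanishing of the anchor on that difference would amount to $\rho(\lsem\d f,\d g\rsem)=[\rho(\d f),\rho(\d g)]$, which is condition (3) of Theorem \ref{teo:sn} and is not among the hypotheses; and closedness of $\lsem\d f,\d g\rsem$ cannot be extracted ``from the Leibniz identity and $\d^{2}=0$'' alone --- controlling it is exactly the information (\ref{ii}) is meant to supply. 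The paper's proof relies on a device your sketch has no substitute for: define $C(\alpha,\beta)=\lsem\alpha,\beta\rsem-\d(\beta(\rho(\alpha)))$, use the Leibniz identity and skew-supersymmetry to show $C(f\d g,\d h)=f\,C(\d g,\d h)-(\text{sign})\,\rho(\d h)\ins\d(f\d g)$, specialise to $f=g$ so that $g\d g$ is closed, apply (\ref{ii}) to the pairs $(g\d g,\d h)$ and $(\d g,\d h)$ to obtain the wedge relations $\d g_{1}\wedge C(\d g_{1},\d g_{2})=0$ and $\d g_{2}\wedge C(\d g_{1},\d g_{2})=0$, and conclude $C(\d g_{1},\d g_{2})=0$ whenever $\d g_{1}$ and $\d g_{2}$ are linearly independent, which is (\ref{i}). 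Testing against an arbitrary derivation $D$ via \eqref{eq:pairing}, as you propose, gives no access to this cancellation; without an argument of this kind the implication is unproved.
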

\begin{proof}
	First, let us assume that \ref{i} holds. By Proposition \ref{pro:formacorchete} we know 
that 
	the bracket is of the form
	$$	\lsem \alpha, \beta \rsem = \rho(\alpha)\ins \d\beta 
	-(-1)^{\parity{\alpha}\parity{\beta}}\rho(\beta)\ins \d\alpha + 
	\d(\beta(\rho(\alpha))),$$
	and hence condition \ref{ii} holds,
	\begin{equation*}
	\begin{split}
	\d\lsem\alpha,\beta\rsem &= 
	\d( \rho(\alpha)\ins\d\beta - 
	(-1)^{\parity{\alpha}\parity{\beta}}\rho(\beta)\ins\d\alpha + 
	\d(\beta(\rho(\alpha))))\\
	&=\d^{2}(\beta(\rho(\alpha))) = 0,
	\end{split}
	\end{equation*}	
	whenever $\d\alpha = 0 = \d\beta$.
	
Now let us assume that condition \ref{ii} holds. Define 
\begin{equation*}
C(\alpha, \beta) = \lsem \alpha, \beta\rsem - 
\d(\beta(\rho(\alpha)))
\end{equation*} 
for $\alpha, \beta \in\Omega^1 (\mathcal{A})$. Notice that $C$ is skew-supersymmetric and
$$\d{C}(\alpha, \beta) = 
\d(\lsem\alpha, \beta\rsem) - 
\d^2 (\beta(\rho(\alpha))) = 0,$$
for any closed $\alpha$ and $\beta$.

Note that
\begin{equation*}
\begin{split}
C(f_1\d{g_1},\d{g_2})&=\lsem f_1\d{g_1},\d{g_2}\rsem-
\d(dg_2(\rho(f_1\d{g_1})))\\
&= -(-1)^{\parity{f_1}\parity{g_2}+
	\parity{g_1}\parity{g_2}}\rho(\d{g_2})(f_1)\d{g_1}\\
&\quad-(-1)^{\parity{g_1}\parity{g_2}}f_1[\d{g_2},\d{g_1}]\\
&\quad -\d(f_1\rho(\d{g_1})(g_2))\\
&=-(-1)^{\parity{f_1}\parity{g_2}+\parity{g_1}\parity{g_2}}
\rho(\d{g_2})(f_1)\d{g_1}\\
&\quad-(-1)^{\parity{g_1}\parity{g_2}}f_1[\d{g_2},\d{g_1}]\\
&\quad -(-1)^{\parity{f_1}\parity{g_1}+\parity{f_1}\parity{g_2}}
\rho(\d{g_1})(g_2)\d{f_1}\\
&\quad -f_1\d(\rho(\d{g_1})(g_2))\\
&= f_1C(\d{g_1},\d{g_2})-
(-1)^{\parity{g_1}\parity{g_2}+\parity{f_1}\parity{g_2}}
\rho(\d{g_2})\ins 
\d(f_1\d{g_1})
\end{split}
\end{equation*}

If $f_1=g_1$, then 
\begin{equation*}
\begin{split}
C(g_1\d{g_1},\d{g_2})&=g_1C(\d{g_1},\d{g_2})\\
&\quad -(-1)^{\parity{g_1}\parity{g_2}+\parity{g_1}\parity{g_2}}
\rho(\d{g_2})\ins \d(g_1\d{g_1})\\
&=g_1C(\d{g_1},\d{g_2})
\end{split}
\end{equation*}
Now, applying the operator $\d$ and taking into account that $g_1\d{g_1}$ is 
closed, we get
\begin{equation}\label{eq:ld1}
\begin{split}
0 &= \d{C}(g_1\d{g_1}, \d{g_2})\\
& = \d(g_1C(dg_1, dg_2))\\
& = \d{g_1} \wedge C(\d{g_1}, \d{g_2}) + g_1\d{C}(\d{g_1}, \d{g_2})\\
& = \d{g_1} \wedge C(\d{g_1}, \d{g_2}).
\end{split}
\end{equation}

Interchanging the roles of $g_1$ and $g_2$, we have
\begin{equation}\label{eq:ld2}
0  = \d{g_2} \wedge C(\d{g_2}, \d{g_1} ) = 
-(-1)^{\parity{g_1}\parity{g_2}}\d{g_2}	
\wedge C(\d{g_1}, \d{g_2}).
\end{equation}

Relations \eqref{eq:ld1} and \eqref{eq:ld2} imply that $\d{g_1}$, $\d{g_2}$ and 
$C(\d{g_1}, \d{g_2})$ 
are 
linearly dependent for arbitrary $g_1$ and $g_2$.
In particular, if $\d{g_1}$ and $\d g_2$ are linearly independent, then 
$C(\d{g_1}, \d{g_2}) = 0$, and 
hence
\[
\lsem \d{g_1},\d{g_2}\rsem = 
\d(\d{g_2}(\rho(\d{g_1} ))) = \d\{g_1, g_2\}
\]
which proves the Proposition.
\end{proof}

As an immediate consequence of all the work made above, we obtain:
\begin{teo}
	Let $\mathcal M$ be the cotangent supermanifold on the smooth manifold $M$, 
	and suppose $(\Omega(M),\{\cdot,\cdot\})$ is a Poisson superalgebra. Under 
	these conditions, the cotangent superbundle 
	$T^{*}\mathcal{M}$ inherits a Lie superalgebroid structure whose anchor map 
	is given by $\rho(\d{f})=\{f,\cdot\}$ and whose bracket is given by 
	\eqref{eq:formacorchete}.
\end{teo}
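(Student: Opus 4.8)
The plan is to recognise this theorem as the geometric restatement of Theorem~\ref{teo:algebra}: once the abstractly defined module $\Omega^1(\mathcal{A})$ is matched with the sections of the cotangent superbundle, the statement is almost immediate, and what little remains is a matter of reconciling module conventions. Concretely, since $\mathcal{M}$ is the cotangent supermanifold on $M$ we have $\mathcal{R}M=\Lambda T^*M$, hence $\mathcal{A}:=\sect{\mathcal{R}M}=\Omega(M)$, whose $\Z_2$-grading is the parity of the form-degree; by hypothesis $(\Omega(M),\{\cdot,\cdot\})$ is a Poisson superalgebra, i.e.\ a graded Poisson algebra of some even degree $k$. Using the observation following Definition~\ref{def:sPoisson} I would first reduce to $k=0$: since $k$ is even, $\parity{\parity{a}+k}=\parity{a}$ for every homogeneous $a$, so all sign factors $(-1)^{(\parity{\alpha}+k)(\parity{\beta}+k)}$ collapse to $(-1)^{\parity{\alpha}\parity{\beta}}$ and the bracket~\eqref{def:corchete} becomes literally the bracket~\eqref{eq:formacorchete}.

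With this reduction in place, Theorem~\ref{teo:algebra} applies verbatim and already yields that $(\Omega^1(\mathcal{A}),\lsem\cdot,\cdot\rsem,\rho)$ --- with $\rho(\d f)=\{f,\cdot\}$ extended $\mathcal{A}$-linearly and $\lsem\cdot,\cdot\rsem$ as in~\eqref{def:corchete} (which for $k=0$ is~\eqref{eq:formacorchete}) --- is a Lie superalgebroid. It remains only to transport this structure to the bundle $T^*\mathcal{M}$. Here the key point is the one already recorded in Section~\ref{sec:pois}: for $\mathcal{A}=\sect{\mathcal{R}M}$ the abstractly defined $\Omega^1(\mathcal{A})=\ker\bigl(\mathbf{m}\colon\mathcal{A}\otimes\mathcal{A}\to\mathcal{A}\bigr)$ is canonically the space of $1$-superforms $\Omega^1(\mathcal{M})=\sect{T^*\mathcal{M}}$, under which $\d$ is the exterior superderivative and the pairing~\eqref{eq:pairing} is the evaluation of $1$-superforms on supervector fields $\sect{T\mathcal{M}}\simeq\Der(\mathcal{A})$ (cf.~\cite[Section 1.4]{guajardo}). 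I would check that $T^*\mathcal{M}$ is a supervector bundle in the sense of Section~\ref{sec:pre} --- the splitting into even and odd parts and the fibrewise $\mathcal{R}_pM$-supermodule structure being the standard ones on a cotangent superbundle --- and then observe that, under this identification, $\rho$ is an \emph{even} bundle map $\sect{T^*\mathcal{M}}\to\sect{T\mathcal{M}}$ (it is $\mathcal{A}$-linear, and even because $\{\cdot,\cdot\}$ has even degree, so $\rho(\d f)=\{f,\cdot\}$ is an even derivation), while $\lsem\cdot,\cdot\rsem$ is an even $\R$-bilinear operation satisfying the two clauses of Definition~\ref{def:salgebroide} --- all of this being exactly what Theorem~\ref{teo:algebra} supplies. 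Hence $T^*\mathcal{M}$ inherits the asserted Lie superalgebroid structure.

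I do not expect any hard computation; the one point I would take care to state cleanly is the left/right module discrepancy flagged in the remark after~\eqref{eq:pairing}. Viewed inside $\mathcal{A}\otimes\mathcal{A}$, the space $\Omega^1(\mathcal{A})$ is treated as a left standard $\mathcal{A}$-module, whereas the pairing~\eqref{eq:pairing} exhibits it most naturally as a right module; Definition~\ref{def:salgebroide} requires the left convention, and one must make sure that the Leibniz clause with its sign $(-1)^{\parity{X}\parity{f}}$ is the one produced by this choice --- consistently with the graded Leibniz identity for $\lsem\cdot,\cdot\rsem$ established in Section~\ref{sec:pois}. Apart from this reconciliation of conventions, the theorem is nothing more than Theorem~\ref{teo:algebra} combined with the identification $\Omega^1(\mathcal{A})\simeq\sect{T^*\mathcal{M}}$ special to the cotangent supermanifold.
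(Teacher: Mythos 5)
Your proposal is correct and follows exactly the route the paper intends: the paper offers no separate proof, presenting the theorem as an immediate consequence of Theorem~\ref{teo:algebra} (with the reduction of even degree $k$ to $k=0$ noted after Definition~\ref{def:sPoisson}) together with the identification, for $\mathcal{A}=\sect{\mathcal{R}M}=\Omega(M)$, of $\Omega^1(\mathcal{A})$ with the $1$-superforms on $\mathcal{M}$ and of $\d$ with the exterior superderivative. Your additional care about the evenness of $\rho$ and the left/right module conventions only makes explicit what the paper leaves implicit.
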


\section{Further structures on superforms}\label{sec:gerst}

Recall we defined on \eqref{eq:superd} the map 
$\d\colon\mathcal{A}\to\ker{\mathbf{m}}$ that associates to any $a$ a 
generator of the latter space, which we have denoted as 
$\Omega^{1}(\mathcal{A})$; now 	we set $\Omega^{0}(\mathcal A)=\mathcal A$ and 
define
\[
\Omega^{r}(\mathcal A):=\left(
\Omega^{1}(\mathcal A)
\right)^{r}
\quad\text{and}\quad
\Omega(\mathcal A):=\bigoplus_{k\geq 0}\Omega^{k}(\mathcal A);
\]
so defined, the space $\Omega(\mathcal A)$ is a bigraded-commutative algebra; 
if $\mathcal{A}=\Omega(M)$ 
then $\Omega(\mathcal A)$ is naturally isomorphic to the superalgebra of graded 
forms on the supermanifold $\supermap{M}{TM}$ ---see, for instance, 
\cite[Section 2]{monterde}---.

Let now $(\mathcal{A},\{\cdot, \cdot\})$ be a Poisson superalgebra. We define a 
bracket $[\cdot,\cdot]$ for elements of  $\Omega^{1}(\mathcal A)$ by the 
following relations:
\begin{equation}\label{eq:KS}
\begin{split}
[f,g]&=0\\
[f,\d{g}]&=[\d{f},g]=\{f,g\}\\
[\d{f},\d{g}]&=\d\{f,g\}
\end{split}
\end{equation}
for all $f, g, h\in \mathcal{A}$, and we extend to all of $\Omega 
(\mathcal{A})$  by 
\begin{equation*}
[\alpha,\beta\wedge\gamma]=
[\alpha,\beta]\wedge\gamma+
(-1)^{\parity{\alpha}\parity{\beta}}
\beta\wedge[\alpha,\gamma]
\end{equation*}
to obtain the following structure

\begin{defn}
	Let $(\mathcal{B},\wedge)$ be a $\Z$-graded superalgebra; if there is a 
	bracket $[\cdot,\cdot]\colon\mathcal{B}\otimes\mathcal{B}\to\mathcal{B}$ 
	such that
	\begin{enumerate}
	\item  
	$[\alpha,\beta]=-(-1)^{(|\alpha|-1)(|\beta|-1)+
		\parity{\alpha}\parity{\beta}}
	[\beta,\alpha]$
	\item 
	$[\alpha,[\beta,\gamma]]=[[\alpha,\beta],\gamma]+
	(-1)^{(|\alpha|-1)(|\beta|-1)+\parity{\alpha}\parity{\beta}}
	[\beta,[\alpha,	\gamma]]$
	\item 
	$[\alpha,\beta\wedge\gamma]=[\alpha,\beta]\wedge\gamma+
	(-1)^{(|\alpha|-1)|\beta|+\parity{\alpha}\parity{\beta}}
	\beta\wedge[\alpha,\gamma]$
\end{enumerate}
for all $\alpha, \beta, \gamma \in \mathcal{B}$, then $(\mathcal{B}, 
\wedge,[\cdot,\cdot])$ is a \textbf{Gerstenhaber superalgebra}.
\end{defn}

The following is a generalisation of a well-known result (see \cite{xu} and 
references therein):

\begin{teo}
	$\Omega(\mathcal{A})$ is a Gerstenhaber superalgebra if and only if 
	$\Omega^1(\mathcal{A})$ is a Lie superalgebroid.
\end{teo}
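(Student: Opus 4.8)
The plan is to establish a dictionary between the Gerstenhaber bracket on $\Omega(\mathcal{A})$ and the superalgebroid data $(\lsem\cdot,\cdot\rsem,\rho)$ on $\Omega^1(\mathcal{A})$, and to check that the three Gerstenhaber axioms translate, degree by degree, into the Lie-superalgebroid axioms (Lie superalgebra structure plus graded Leibniz identity). Concretely, given a Gerstenhaber bracket on $\Omega(\mathcal{A})$, one restricts it to $\Omega^1(\mathcal{A})$ and defines the anchor by $\rho(\alpha)(f)=[\alpha,f]$ for $f\in\Omega^0(\mathcal{A})=\mathcal{A}$; conversely, given a superalgebroid, one defines $[f,g]=0$, $[\alpha,f]=\rho(\alpha)(f)$, $[f,\alpha]=\pm\rho(\alpha)(f)$ on low degrees and extends to all of $\Omega(\mathcal{A})$ by the graded Leibniz rule (axiom 3 of the Gerstenhaber definition), exactly as in~\eqref{eq:KS}. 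The first thing I would verify is that this extension is well defined --- i.e.\ independent of how an element of $\Omega^r(\mathcal{A})$ is written as a product of generators --- which follows because $\Omega(\mathcal{A})$ is generated in degrees $0$ and $1$ and the Leibniz rule is consistent with graded-commutativity of $\wedge$; here one uses that $[\alpha,\cdot]$ is forced to be a degree-$(|\alpha|-1)$ graded derivation of $\wedge$.

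Next I would do the degree bookkeeping. In the superalgebroid the relevant signs involve only the parity $\parity{\cdot}$, while in the Gerstenhaber superalgebra they involve the shifted degree $|\cdot|-1$ together with the parity; on $\Omega^1(\mathcal{A})$ the degree is $1$, so $|\alpha|-1$ is even and the Gerstenhaber signs collapse precisely to the superalgebroid signs --- this is the reason the correspondence is clean. With that observation, Gerstenhaber skew-symmetry (axiom 1) restricted to $\Omega^1\times\Omega^1$ becomes the skew-supersymmetry of $\lsem\cdot,\cdot\rsem$; Gerstenhaber Jacobi (axiom 2) on $\Omega^1\times\Omega^1\times\Omega^1$ becomes the graded Jacobi identity of the bracket on sections; and Gerstenhaber Jacobi on $\Omega^1\times\Omega^1\times\Omega^0$, after unwinding $[\alpha,f]=\rho(\alpha)(f)$, becomes exactly the graded Leibniz identity $\lsem\alpha,f\beta\rsem=\rho(\alpha)(f)\beta+(-1)^{\parity{\alpha}\parity{f}}f\lsem\alpha,\beta\rsem$ of Definition~\ref{def:salgebroide} --- one also checks here that $\rho$ is a morphism into $\Der(\mathcal{A})$ and is even. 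Axiom 3 in low degrees gives nothing new beyond what defines $\rho$, and in higher degrees it is automatically satisfied by construction of the extension.

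For the converse direction, starting from a Lie superalgebroid $(\Omega^1(\mathcal{A}),\lsem\cdot,\cdot\rsem,\rho)$, I would define $[\cdot,\cdot]$ on $\Omega(\mathcal{A})$ by~\eqref{eq:KS} and the Leibniz extension, and then verify the three Gerstenhaber axioms hold on \emph{all} of $\Omega(\mathcal{A})$. The standard trick is to reduce each identity to the case of generators (elements of degree $0$ and $1$) by an induction on total degree, using that both sides of each axiom are graded derivations in each slot once the other arguments are fixed; then the base cases are precisely the superalgebroid axioms just matched up, together with the trivial identities $[f,g]=0$. The main obstacle --- and the only genuinely laborious part --- is this reduction-to-generators step for the Jacobi identity (axiom 2): one must show that if the Jacobiator vanishes on generators then it vanishes everywhere, which requires checking that the Jacobiator $J(\alpha,\beta,\gamma)=[\alpha,[\beta,\gamma]]-[[\alpha,\beta],\gamma]-(\pm)[\beta,[\alpha,\gamma]]$ is a graded derivation in its third argument (hence in each argument by skew-symmetry), a computation that is routine but sign-delicate. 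Everything else is a matter of carefully tracking the $(|\cdot|-1)$ versus parity shifts, which as noted above agree on $\Omega^1$, so no obstruction arises there.
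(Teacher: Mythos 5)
Your overall route is the same as the paper's: in one direction restrict the Gerstenhaber bracket to $\Omega^{1}(\mathcal{A})$ and read off the anchor as $\rho(\alpha)=[\alpha,\cdot]$ acting on $\Omega^{0}(\mathcal{A})=\mathcal{A}$; in the other, start from \eqref{eq:KS}-type assignments ($[f,g]=0$, $[\alpha,f]=\rho(\alpha)(f)$, $[\alpha,\beta]=\lsem\alpha,\beta\rsem$) and extend to all of $\Omega(\mathcal{A})$ by the derivation property --- the paper writes this extension as an explicit signed sum over decomposable forms, which is just the closed form of your inductive Leibniz extension --- and both you and the paper leave the final sign-heavy verification as a ``long/routine computation''. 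Your key observation that the shifted degree $|\alpha|-1$ is even on $\Omega^{1}(\mathcal{A})$, so the Gerstenhaber signs collapse to the superalgebroid signs, is correct and is implicitly what makes the paper's dictionary work.

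There is, however, one concrete misstep in your forward direction. The graded Leibniz identity of Definition \ref{def:salgebroide}, namely $\lsem\alpha,f\beta\rsem=\rho(\alpha)(f)\,\beta+(-1)^{\parity{\alpha}\parity{f}}f\lsem\alpha,\beta\rsem$, is \emph{not} what the Gerstenhaber Jacobi identity gives on $\Omega^{1}\times\Omega^{1}\times\Omega^{0}$. Since $f\beta=f\wedge\beta$, that identity is exactly axiom 3 (the Gerstenhaber Leibniz rule) applied to $[\alpha,f\wedge\beta]$ with $|f|=0$, where the sign $(-1)^{(|\alpha|-1)|f|+\parity{\alpha}\parity{f}}$ reduces to $(-1)^{\parity{\alpha}\parity{f}}$; this is precisely how the paper obtains it. What the Jacobi identity with a degree-zero entry actually yields is the anchor compatibility $\rho(\lsem\alpha,\beta\rsem)=[\rho(\alpha),\rho(\beta)]$, which is not among the axioms of Definition \ref{def:salgebroide} (it follows from them). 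So your twin claims that the algebroid Leibniz identity comes from Jacobi on $\Omega^{1}\times\Omega^{1}\times\Omega^{0}$ and that ``axiom 3 in low degrees gives nothing new beyond what defines $\rho$'' are both backwards, and taken literally your verification of that axiom would not go through. The repair is immediate (invoke axiom 3), and with it the rest of your argument, including the reduction-to-generators via the Jacobiator being a graded derivation in each slot, is sound and at the same level of detail as the paper's own proof.
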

\begin{proof}
	Suppose that there is Lie superbracket $[\cdot, \cdot]$ of degree $-1$  
	that makes 
	$\Omega(\mathcal{A})$ into a Gerstenhaber superalgebra. It is clear that 
	$(\Omega^1(\mathcal{A}),\lsem\cdot, \cdot\rsem)$ is a Lie superalgebra with 
	$\lsem\alpha,\beta\rsem=[\alpha,\beta]$ for $\alpha$ and 
	$\beta\in\Omega^1(\mathcal{A})$.
	Now, for any $\alpha\in \Omega^1(\mathcal{A})$ and $f,g\in\mathcal{A}$, it 
	follows 
	from the properties of Gerstenhaber superalgebra that
	\begin{equation*}
	[\alpha, f g] = [\alpha, f ]g +(-1)^{\parity{\alpha}\parity{f}} f [\alpha, 
	g].
	\end{equation*}	
	Hence, $[\alpha,\cdot]$ is a derivation on $\mathcal{A}$. If we define 
	$\rho:\Omega^1(\mathcal{A})\rightarrow\Der(\mathcal{A})$ by 
	$\rho(\alpha)=[\alpha,\cdot]$, then 
	\begin{equation*}
	\lsem\alpha, f \beta\rsem = \rho(\alpha)(f) \beta + f \lsem\alpha, 
	\beta\rsem
	\end{equation*}	
	for $\alpha,\beta\in \Omega^1(\mathcal{A})$ and $f\in\mathcal{A}$.
	This shows that $(\Omega^1(\mathcal{A}), \lsem\cdot, \cdot\rsem,\rho)$ is 
	indeed a Lie 
	superalgebroid.\\
	Conversely, given a Lie superalgebroid $(\Omega^1(\mathcal{A}), \lsem\cdot, 
	\cdot\rsem,\rho)$,we define
	\begin{equation*}
	\begin{split}
	[f,g]&=0\\
	[\alpha,f]&=\rho(\alpha)(f)\\
	[\alpha,\beta]&=\lsem\alpha,\beta\rsem
	\end{split}
	\end{equation*}	
	for $f,g\in\mathcal{A}$ and $\alpha,\beta\in \Omega^1(\mathcal{A})$; for 
	decomposable forms $\mathbf{A}=\optupla{\alpha}{\wedge}{k}$ and 
	$\mathbf{B}=\optupla{\beta}{\wedge}{l}$ we define
	\begin{equation*}
	[\mathbf{A},\mathbf{B}]
	=\sum_{i=1,j=1}^{k,l}(-1)^{a_{i}+b_{j}+k-1} 
	\mathbf{A}_{i}
	\wedge[\alpha_i,\beta_j]\wedge
	\mathbf{B}_{j},
	\end{equation*}
	where we have defined: 
	$\mathbf{A}_{i}:=\omitupla{\alpha}{\wedge}{i}{k}$, 
	$\mathbf{B}_j:=\omitupla{\beta}{\wedge}{j}{l}$,  
	$a_{i}:=\parity{\alpha_i} \sum_{p=i+1}^{k}\parity{\alpha_p}+i$ and
	$b_{j}:=\parity{\beta_j}\sum_{q=1}^{j-1}\parity{\beta_q}+j$; the notation 
	$\omitupla{\alpha}{\wedge}{i}{k}$ means the $i$th factor is omitted. A long 
	computation shows that $\Omega(\mathcal{A})$ with $[\cdot,\cdot]$ is indeed 
	a Gerstenhaber superalgebra.
\end{proof}

\begin{defn}
	Let $[\cdot,\cdot]$ be a bracket on $\Omega(\mathcal{A})$ such that 
	$(\Omega(\mathcal{A}),\wedge,[\cdot,\cdot])$ is a Gerstenhaber 
	superalgebra. If the exterior superderivative is a derivation of 
	degree 1 for $[\cdot,\cdot]$, that is if
	\begin{equation*}
	\d[\alpha,\beta]= 
	[\d\alpha,\beta]+(-1)^{|\alpha|-1}[\alpha,\d\beta]
	\end{equation*}
	for all $\alpha, \beta \in 
	\Omega(\mathcal{A})$, we call it a \textbf{differential Gerstenhaber 
		superalgebra}.
\end{defn}

\begin{teo}
	A Gerstenhaber superbracket on $\Omega(\mathcal{A})$ is differential if and 
	only if it is the bracket associated to a Poisson superstructure on 
	$\mathcal{A}$ in the sense 
	of \eqref{eq:KS}.
\end{teo}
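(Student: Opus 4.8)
The plan is to test the relevant identities on the algebra generators $f$ and $\d f$ ($f\in\mathcal{A}$) of the bigraded algebra $\Omega(\mathcal{A})$ and then to propagate them to all of $\Omega(\mathcal{A})$, borrowing the substantive input from Theorem~\ref{teo:algebra} and from the equivalence, established in the previous theorem, between Lie superalgebroid structures on $\Omega^{1}(\mathcal{A})$ and Gerstenhaber superalgebra structures on $\Omega(\mathcal{A})$. For the ``if'' direction, suppose $(\mathcal{A},\{\cdot,\cdot\})$ is a Poisson superalgebra and let $[\cdot,\cdot]$ denote the extension of \eqref{eq:KS} to $\Omega(\mathcal{A})$. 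By Theorem~\ref{teo:algebra} the space $\Omega^{1}(\mathcal{A})$ is a Lie superalgebroid, and the Gerstenhaber superbracket built from it in the previous theorem restricts on $\Omega^{1}(\mathcal{A})$ precisely to \eqref{eq:KS} (since $\d^{2}=0$ we have $\lsem\d f,\d g\rsem=\d(\d g(\rho(\d f)))=\d\{f,g\}$), so $[\cdot,\cdot]$ is a Gerstenhaber superbracket and only the differential property $\d[\alpha,\beta]=[\d\alpha,\beta]+(-1)^{|\alpha|-1}[\alpha,\d\beta]$ remains to be checked. On the four types of generator pairs this is immediate from \eqref{eq:KS} and $\d^{2}=0$: for instance for $(\d f,g)$ both sides equal $\d\{f,g\}$ (the left since $[\d f,g]=\{f,g\}$, the right since $[\d\d f,g]=0$ and $[\d f,\d g]=\d\{f,g\}$), for $(f,g)$ both sides vanish (the left since $[f,g]=0$, the right since $[\d f,g]-[f,\d g]=\{f,g\}-\{f,g\}=0$), and the remaining two cases collapse via $\d^{2}=0$. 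One then verifies that the set of pairs satisfying the identity is closed under $\wedge$ in either argument --- a routine sign computation using the Leibniz rule for $\d$ and Gerstenhaber axiom~3 (together with axiom~1 for the first slot) --- and since $\Omega(\mathcal{A})$ is generated as an algebra by $\mathcal{A}\cup\d\mathcal{A}$, the identity holds throughout; hence $[\cdot,\cdot]$ is differential.

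For the converse, let $[\cdot,\cdot]$ be a differential Gerstenhaber superbracket on $\Omega(\mathcal{A})$. By the previous theorem $\Omega^{1}(\mathcal{A})$ is a Lie superalgebroid with anchor $\rho(\alpha)=[\alpha,\cdot]|_{\mathcal{A}}$ and bracket $\lsem\alpha,\beta\rsem=[\alpha,\beta]$; set $\{f,g\}:=\rho(\d f)(g)=[\d f,g]$, which lands in $\mathcal{A}$ for degree reasons and satisfies the Leibniz identity because $\rho(\d f)$ is a derivation (the computation after \eqref{eq:corcheteP}). As the bracket has degree $-1$, $[f,g]\in\Omega^{-1}(\mathcal{A})=0$, so the differential property applied to $(f,g)$ forces $[f,\d g]=[\d f,g]=\{f,g\}$, while applied to $(\d f,g)$ it gives $[\d f,\d g]=\d[\d f,g]=\d\{f,g\}$; thus $[\cdot,\cdot]$ coincides with \eqref{eq:KS} on the generators $\mathcal{A}\cup\d\mathcal{A}$. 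Skew-supersymmetry of $\{\cdot,\cdot\}$ follows from Gerstenhaber axiom~1 on the pair $(f,\d g)$ (using $|f|=0$, $|\d g|=1$, and $\parity{\d g}=\parity{g}$, the super-differential being parity-even), and the graded Jacobi identity for $\{\cdot,\cdot\}$ follows from Gerstenhaber axiom~2 on the mixed triple $(\d f,\d g,h)$: substituting $[\d g,h]=\{g,h\}$, $[\d f,\d g]=\d\{f,g\}$ and $[\d f,h]=\{f,h\}$ and using $|\d f|=|\d g|=1$ turns it into $\{f,\{g,h\}\}=\{\{f,g\},h\}+(-1)^{\parity{f}\parity{g}}\{g,\{f,h\}\}$ (alternatively one may invoke Theorem~\ref{teo:sn}). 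Hence $(\mathcal{A},\{\cdot,\cdot\})$ is a Poisson superalgebra; its associated bracket in the sense of \eqref{eq:KS} is then a Gerstenhaber superbracket agreeing with $[\cdot,\cdot]$ on $\mathcal{A}\cup\d\mathcal{A}$, and since both are biderivations of $\wedge$ they coincide on all of $\Omega(\mathcal{A})$.

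I expect the only genuine difficulty to be bookkeeping of two kinds. First, the bigrading: the $\Z_{2}$-parity $\parity{\cdot}$ and the $\Z$-degree $|\cdot|$ enter the Gerstenhaber axioms independently, and the super-differential raises the degree by one while preserving the parity --- conflating the two is the quickest way to a wrong sign. Second, the ``closed under $\wedge$'' step in either direction is the standard ``enough to check on generators'' argument, but making it airtight requires expanding both $\d[\alpha,\beta\wedge\gamma]$ and $[\d\alpha,\beta\wedge\gamma]+(-1)^{|\alpha|-1}[\alpha,\d(\beta\wedge\gamma)]$ through the (bi)derivation axioms and matching a fair number of terms. Neither step is conceptually deep --- the substance lives in Theorems~\ref{teo:algebra} and~\ref{teo:sn} and in the previous theorem --- but both are error-prone.
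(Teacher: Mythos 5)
Your proposal is correct and takes essentially the same route as the paper: in the "differential $\Rightarrow$ Poisson" direction it extracts $\{f,g\}$ from the bracket on generators (using that $[f,g]=0$ for degree reasons and the differential identity forces $[\d f,g]=[f,\d g]$, $[\d f,\d g]=\d\{f,g\}$) and derives skew-symmetry, Jacobi and Leibniz from the Gerstenhaber axioms, while in the converse it checks the differential identity on generators and propagates via the biderivation property. The differences are cosmetic only --- the paper sets $\{f,g\}=[f,\d g]$ and applies the graded Jacobi axiom to the triple $(f,\d g,\d h)$ rather than $(\d f,\d g,h)$, and it verifies the differential property directly on $\alpha=\d f$, $\beta=g\,\d h$ instead of your four generator pairs plus closure under $\wedge$ --- so the two arguments are interchangeable in substance and rigor.
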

\begin{proof}
	Let $[\cdot,\cdot]$ be a differential Gerstenhaber superbracket on 
	$\Omega(\mathcal{A})$. We 
	define the map 
	$\{\cdot,\cdot\}:\mathcal{A}\times\mathcal{A}\to\mathcal{A}$ by  
	$(f,g)\mapsto \{f,g\}:=[f,\d 
	g]$ which is a Poisson superbracket; indeed:
	\begin{enumerate}
		\item $\{f,g\}=-(-1)^{\parity{f}\parity{g}}\{g,f\}$ because 
		\begin{equation*}
		\begin{split}
		0=&\d[f,g]=[\d{f},g]+(-1)^{(|f|-1)}[f,\d{g}]\\
		&=-(-1)^{(|\d{f}|-1)(|g|-1)+\parity{\d{f}}\parity{g}}[g,\d{f}]-
		[f,\d{g}]\\
		&=-(-1)^{\parity{f}\parity{g}}\{g,f\}-\{f,g\};
		\end{split}
		\end{equation*}
		\item the graded Jacobi identity holds:
		\begin{equation*}
		\begin{split}
		\{f,\{g,h\}\}&=[f,\d[g,\d{h}]]\\
		&= [f,[\d{g},\d{h}]+(-1)^{(|g|-1)}[g,\d^2h]]\\
		&=[[f,\d{g}],\d{h}]+
		(-1)^{(|f|-1)(|\d{g}|-1)+\parity{f}\parity{\d{g}}}
		[\d{g},[f,\d{h}]]\\
		&=\{\{f,g\},h\}+(-1)^{\parity{f}\parity{g}}\{g,\{f,h\}\}\\
		\end{split}
		\end{equation*}
		\item the graded Leibniz identity holds:
		\begin{equation*}
		\begin{split}
		\{f,g\wedge h\}&=[f,\d(g\wedge h)]=[f,\d g\wedge h+g\wedge dh]\\
		&=[f,\d g\wedge h]+[f, g\wedge \d h]\\
		&=[f,\d g]\wedge h+(-1)^{(|f|-1)|\d g|+\parity{f}\parity{\d 
		g}}\d{g}\wedge[f,h]\\
		&\quad+[f,g]\wedge 
		\d{h}+(-1)^{(|f|-1)|g|+\parity{f}\parity{g}}g\wedge[f,\d h]\\
		&=\{f,g\}\wedge h+(-1)^{\parity{f}\parity{g}}g\wedge\{f,h\}		
		\end{split}
		\end{equation*}
	\end{enumerate}
	So, $\{\cdot,\cdot\}$ is a Poisson superbracket on $\Omega(M)$; moreover 
	$\{\cdot,\cdot\}$  
	induces $[\cdot,\cdot]$ by the relations \eqref{eq:KS}.
	
	Now, let $[\cdot,\cdot]:\Omega(\mathcal{A})\times \Omega(\mathcal{A})\rightarrow 
	\Omega(\mathcal{A})$ be the bracket associated to a Poisson superalgebra
	$(\mathcal{A},\{\cdot,\cdot\})$. It is enough show that it is differential for 
	$\alpha=\d f$ 
	and 
	$\beta=g \d h$ for some $f,g,h\in\mathcal{A}$:
	\begin{equation*}
	\begin{split}
	\d[\alpha,\beta]&=\d[\d f,g\d h]\\
	&=\d([\d f,g] 
	h+(-1)^{(\abs{\d f}-1)(\abs{g}-1)+\parity{\d f}\parity{g}}g 
	[\d f,\d h])\\
	&=\d[\d f,g]\wedge \d h+
	(-1)^{\parity{f}\parity{g}}\d g\wedge[\d f,\d h]\\
	&=\d\{f,g\}\wedge \d 
	h+(-1)^{(\abs{\d{f}}-1)(|g|-1)+\parity{df}\parity{\d g}}
	\d{g}\wedge[\d f,\d h]\\
	&=[\d f,\d g]\wedge\d h
	+(-1)^{(|\d f|-1)(|g|-1)+\parity{\d f}\parity{\d g}}
	\d g\wedge[\d f,\d h]\\
	&=[\d f,\d g\wedge \d h]\\
	&=[\d^2f,g\d h]+(-1)^{|\d f|-1}[\d f,\d(g\d h)]\\
	&=[\d\alpha,\beta]+
	(-1)^{|\alpha|-1}[\alpha,\d\beta]\qedhere
	\end{split}
	\end{equation*}
\end{proof}

Let $(\Omega(\mathcal{A}),\wedge,[\cdot,\cdot],\d)$ be a differential 
Gerstenhaber 
superalgebra; from the Theorem above,
$(\Omega^1(\mathcal{A}),\lsem\cdot,\cdot\rsem,\rho)$ is a Lie superalgebroid.
Moreover, $\rho$ is skew-supersymmetric since
\begin{equation*}
\begin{split}
f_1\d{g_1}(\rho(f_2\d{g_2}))
&=\langle
f_2\rho(\d{g_2}),f_1\d{g_1}
\rangle\\
&=(-1)^{\parity{f_1}\parity{g_1}}f_2[\d g_2,\d g_1]f_1\\
&=-(-1)^{\parity{f_1}\parity{g_1}+\parity{g_1}\parity{g_2}}f_2[\d g_1,\d g_2]f_1\\
&=-(-1)^{\parity{f_1}\parity{g_1}+\parity{g_1}\parity{g_2}+\parity{f_2}\parity{[\d g_1,\d 
g_2]}+\parity{f_1}\parity{f_2}+\parity{f_1\parity{[\d g_1,\d g_2]}}}f_1[\d g_1,\d g_2]f_2\\
&=-(-1)^{\parity{f_1\d g_1}\parity{f_2\d g_2}}
\langle
f_1\rho(\d{g_1}),f_2\d{g_2}
\rangle\\
&=f_2\d{g_2}(\rho(f_1\d{g_1}))
%
\end{split}
\end{equation*}
for all $f_1,f_2,g_2,g_2\in\mathcal{A}$, where $\langle\cdot, \cdot\rangle$ denotes the pairing of 
supervectorfields and  $1$-superforms. Also, 
due to the fact that the bracket is differential
we have 
\begin{equation*}
\d(\rho(\d f)(g))=\d([\d f,g])=[\d f,\d g].
\end{equation*}
That is, the hypotheses of Proposition \ref{pro:formacorchete} are satisfied, therefore 
the bracket of the Lie superalgebroid is of the form \eqref{def:corchete} and 
therefore the 
superalgebroid is of Poisson type.  Now, we can prove  the following

\begin{prop}
	If $(\Omega^1(\mathcal{A}),\lsem\cdot,\cdot\rsem,\rho)$ is a Lie algebroid 
	with 
	\begin{enumerate}
		\item $\rho$ skew-supersymmetric
		\item  $\lsem\d f,\d g\rsem=\d(\rho(\d f)(g))$ for all $f,g\in\mathcal{A}$
	\end{enumerate}
	then $(\Omega(\mathcal{A}),\wedge,[\cdot,\cdot],\d)$ is a differential 
	Gerstenhaber superalgebra.
\end{prop}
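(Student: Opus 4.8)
The plan is to rely on the two theorems already proved in this section. By the first of them, the Lie superalgebroid $(\Omega^1(\mathcal A),\lsem\cdot,\cdot\rsem,\rho)$ already makes $\Omega(\mathcal A)$ into a Gerstenhaber superalgebra under the bracket $[\cdot,\cdot]$ determined, on the generators $\mathcal A$ and $\d\mathcal A$, by $[f,g]=0$, $[\d f,g]=\rho(\d f)(g)$, $[\d f,\d g]=\lsem\d f,\d g\rsem$, and extended to all of $\Omega(\mathcal A)$ by the graded Leibniz rule; this is the $[\cdot,\cdot]$ appearing in the statement. So the only thing left is to show that $[\cdot,\cdot]$ is \emph{differential}, i.e.\ that the identity $\d[\alpha,\beta]=[\d\alpha,\beta]+(-1)^{|\alpha|-1}[\alpha,\d\beta]$ holds on $\Omega(\mathcal A)$, and this I would verify directly.

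To do so, I would introduce the defect $T(\alpha,\beta):=\d[\alpha,\beta]-[\d\alpha,\beta]-(-1)^{|\alpha|-1}[\alpha,\d\beta]$ and first argue that $T$ is a graded biderivation of $\wedge$: since $\d$ is a degree-$1$ derivation of $\wedge$ and each $[\alpha,\cdot]$ is a derivation of $\wedge$ by the Gerstenhaber axioms, a routine sign check shows $T(\alpha,-)$ obeys a Leibniz rule in its second slot, and graded skew-symmetry of $[\cdot,\cdot]$ carries this over to the first slot. As $\Omega(\mathcal A)$ is generated as a $\wedge$-algebra by $\Omega^0(\mathcal A)=\mathcal A$ and the exact forms $\d\mathcal A$, it then suffices to check $T(x,y)=0$ when $x,y$ each run over $\mathcal A\cup\d\mathcal A$, i.e.\ in four cases.

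I would then dispatch the cases. For $x=f,y=g\in\mathcal A$: $\d[f,g]=0$, and the two remaining terms cancel because $[f,\d g]=\rho(\d f)(g)=[\d f,g]$, which is precisely the skew-supersymmetry of $\rho$ (hypothesis (1)) together with the skew-symmetry of $[\cdot,\cdot]$. For $x=\d f,y=g$: the surviving contributions are $\d(\rho(\d f)(g))$ and $-\lsem\d f,\d g\rsem$, which cancel by hypothesis (2); the case $x=f,y=\d g$ is symmetric and uses (1) and (2). For $x=\d f,y=\d g$: all three terms vanish, since $\d[\d f,\d g]=\d\lsem\d f,\d g\rsem=\d^2(\rho(\d f)(g))=0$ by hypothesis (2) and $\d^2=0$, while the other two terms carry a $\d^2$. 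Hence $T\equiv 0$, so $\d$ is a degree-$1$ derivation of $[\cdot,\cdot]$ and $(\Omega(\mathcal A),\wedge,[\cdot,\cdot],\d)$ is a differential Gerstenhaber superalgebra.

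I expect the only genuine obstacle to be sign bookkeeping: to make the reduction ``$T$ is a biderivation, hence determined by its values on generators'' rigorous, one must track the $\Z_2\times\Z$ signs through both the Gerstenhaber Leibniz identity and $\d(\alpha\wedge\beta)=\d\alpha\wedge\beta+(-1)^{|\alpha|}\alpha\wedge\d\beta$, and check that the conventions make the two Leibniz rules compatible so that $T$ really extends as a biderivation; the four generator computations themselves are immediate. A slicker but essentially equivalent alternative would be to note that hypotheses (1)--(2), together with the standard fact that the anchor of a Lie superalgebroid is a morphism of Lie superalgebras, force $\{f,g\}:=\rho(\d f)(g)$ to be a Poisson superalgebra on $\mathcal A$ whose associated Koszul-type bracket~\eqref{eq:KS} agrees with $[\cdot,\cdot]$ on generators, hence everywhere; the theorem characterizing differential Gerstenhaber superbrackets then gives the conclusion immediately.
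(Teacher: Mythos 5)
Your proposal is correct and follows essentially the same route as the paper: invoke the already-proved equivalence to get the Gerstenhaber superalgebra structure induced by $(\Omega^1(\mathcal{A}),\lsem\cdot,\cdot\rsem,\rho)$, and then verify the differential identity $\d[\alpha,\beta]=[\d\alpha,\beta]+(-1)^{|\alpha|-1}[\alpha,\d\beta]$ by reducing to generators and computing with hypotheses (1) and (2). Your defect-map $T$ argument is in fact a more explicit justification of the reduction step that the paper only asserts (it checks the identity just for $\alpha=\d f$, $\beta=g\,\d h$), and your cases $(f,g)$ and $(f,\d g)$ make visible where skew-supersymmetry of $\rho$ enters, which the paper's displayed computation does not show.
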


\begin{proof}
We only need to prove that the Gerstenhaber superalgebra induced by the superalgebroid is 
differential. For this, first note that
\begin{equation*}
\lsem\d f, \d g\rsem=\d(\rho(\d f)(g))=\d [\d f,g]
\end{equation*} 
for all $f,g\in\mathcal{A}$ and remember that it is enough show that the bracket 
induced by $\lsem\cdot,\cdot\rsem$ in $\Omega(\mathcal{A})$ it is differential for 
$\alpha=\d f$ and $\beta=g \d h$ for some $f,g,h\in\mathcal{A}$:
\begin{equation*}
\begin{split}
\d[\d f, g\d h]&=\d\lsem\d f, g\d h\rsem\\
&=\d (\rho(\d f)(g)\d h+(-1)^{\parity{\d f}\parity{g}}g\lsem\d f,\d h\rsem)\\
&=\d (\rho(\d f)(g))\wedge\d h+(-1)^{\parity{\d f}\parity{g}}\d g\wedge\lsem\d f,\d 
h\rsem+(-1)^{\parity{\d f}\parity{g}} g\d \lsem\d f,\d h\rsem\\
&=\lsem\d f,\d g\rsem\wedge\d h+(-1)^{\parity{ f}\parity{g}} \d g\wedge \lsem\d f,\d 
h\rsem+(-1)^{\parity{\d f}\parity{g}} g \d^2[\d f,\d h]\\
&=[\d f,\d g]\wedge\d h+(-1)^{\parity{\d f}\parity{g}} \d g\wedge [\d f,\d 
h]\\
&=[\d f,\d(g\d h)].
\end{split}
\end{equation*}
This establishes the proposition.
\end{proof}

Observe that if $ (\mathcal{A}, \{\cdot, \cdot\}) $ is a Poisson superalgebra, 
we know from the 
previous 
section that $ (\Omega^1 (\mathcal{A}), \lsem \cdot, \cdot \rsem, \rho) $ is a 
Lie superalgebroid of Poisson type with bracket defined by $ 
\eqref{def:corchete} $. All of the above implies that $ \lsem 
\cdot, \cdot 
\rsem $ coincides with the restriction on $1$-superforms of the bracket defined 
by $ 
\eqref{eq:KS}$. We have now proved:

\begin{teo}
$(\mathcal{A}, \{\cdot,\cdot\})$ is a Poisson superalgebra if and only if 
$(\Omega^1(\mathcal{A}),\lsem\cdot,\cdot\rsem,\rho)$	is a Lie superalgebroid of Poisson 
type with bracket defined by $\eqref{eq:formacorchete}$.
\end{teo}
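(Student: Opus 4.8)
The plan is to treat this theorem as a consolidation of Theorem~\ref{teo:algebra}, Theorem~\ref{teo:sn} and Proposition~\ref{pro:formacorchete}: essentially no new computation is needed, and the proof amounts to assembling those results in the right order and checking that the bracket one recovers is the one one started with. Since the bracket \eqref{eq:formacorchete} is literally formula \eqref{def:corchete}, the two directions will be seen to be mutually inverse.

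For the ``only if'' direction I would start from a Poisson superalgebra $(\mathcal{A},\{\cdot,\cdot\})$. By Definition~\ref{def:sPoisson} its degree is even, and by the remark following that definition I may assume $k=0$. Theorem~\ref{teo:algebra} then already produces a Lie superalgebroid $(\Omega^1(\mathcal{A}),\lsem\cdot,\cdot\rsem,\rho)$ with $\rho(\d f)=\{f,\cdot\}$ and bracket \eqref{def:corchete}, which is verbatim \eqref{eq:formacorchete}. It remains to see it is of Poisson type. First I would check that $\rho$ is skew-supersymmetric by verifying \eqref{eq:superskew} on the generators: for $\alpha=\d f$, $\beta=\d g$ one has $\d f(\rho(\d g))=\rho(\d g)(f)=\{g,f\}=-(-1)^{\parity{f}\parity{g}}\{f,g\}=-(-1)^{\parity{f}\parity{g}}\d g(\rho(\d f))$, and then extend to arbitrary $\alpha,\beta$ by $\mathcal{A}$-bilinearity of the pairing \eqref{eq:pairing}. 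Next I would note that the operation $\{f,g\}=\rho(\d f)(g)$ of \eqref{eq:corcheteP} coincides with the Poisson bracket of $\mathcal{A}$, which satisfies the graded Jacobi identity by hypothesis; hence condition~\ref{1} of Theorem~\ref{teo:sn} holds, so by that theorem $\rho$ and $\lsem\cdot,\cdot\rsem$ satisfy all of its conditions, and together with the skew-supersymmetry just shown the superalgebroid is of Poisson type with bracket of the form \eqref{eq:formacorchete}.

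For the ``if'' direction I would start from a Lie superalgebroid of Poisson type whose bracket has the form \eqref{eq:formacorchete}. By the definition of Poisson type, $\rho$ is skew-supersymmetric and one---hence all---of the conditions of Theorem~\ref{teo:sn} holds. I would set $\{f,g\}=\rho(\d f)(g)$ as in \eqref{eq:corcheteP}; its $\mathbb{R}$-bilinearity and the graded Leibniz identity were already checked in the text, skew-supersymmetry of $\{\cdot,\cdot\}$ is immediate from \eqref{eq:superskew} with $\alpha=\d f$, $\beta=\d g$, and the graded Jacobi identity is condition~\ref{1} of Theorem~\ref{teo:sn}. Thus $(\mathcal{A},\{\cdot,\cdot\})$ satisfies the axioms of a Poisson superalgebra (in the $k=0$ form). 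Finally I would observe that, by the definitions of $\rho$ and of $\{\cdot,\cdot\}$, the bracket \eqref{eq:formacorchete} is precisely the bracket \eqref{def:corchete} attached to this Poisson superalgebra by Theorem~\ref{teo:algebra}, so the two passages are mutually inverse.

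I do not expect a genuine obstacle here, since the statement is a packaging of earlier results; the only point needing a little care is the propagation of skew-supersymmetry of $\rho$ from the generating $1$-superforms $\d f$ to all of $\Omega^1(\mathcal{A})$, which relies on $\Omega^1(\mathcal{A})$ being generated by $\im\d$ together with the $\mathcal{A}$-linearity of both $\rho$ and the pairing \eqref{eq:pairing}, so that the signs incurred when commuting coefficients past one another cancel in matching pairs.
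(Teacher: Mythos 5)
Your proposal is correct and matches the paper's own treatment: the theorem is proved there exactly as you describe, by consolidating Theorem~\ref{teo:algebra}, Theorem~\ref{teo:sn}, Proposition~\ref{pro:formacorchete} and the discussion around \eqref{eq:corcheteP}--\eqref{eq:superskew}, with the only computational point being the skew-supersymmetry of $\rho$ on coefficients times exact generators, which the paper carries out explicitly (in the displayed computation with $f_1\d{g_1}$, $f_2\d{g_2}$ just before the theorem) and which you correctly identify and resolve in the same way. The only cosmetic difference is that the paper phrases part of that verification via the differential Gerstenhaber superalgebra detour and the bracket \eqref{eq:KS}, whereas you argue directly from the Section~\ref{sec:pois} results; the content is the same.
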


\noindent\textbf{Acknowledgements:} the authors wish to thank the Mexican 
National Council for Science and Technology (CONACYT) for the funding of 
their Postdoctoral Fellowships under programme 2018-000005-01-NACV 
---\emph{Programa de Estancias Posdoctorales Nacionales 2018-1}--- with 
reference numbers (CVU) 289063 and 344289, during which this paper was written.

\bibliographystyle{plain}
\bibliography{superpoiss}

\begin{description}
	\item[Email addresses:] dennise.gb@fc.uaslp.mx, oscar.guajardo@uaslp.mx
	\item[Postal address:] Facultad de Ciencias, UASLP. Av. Chapultepec 1570, Privadas del 
	Pedregal. C.P. 78295, San Luis Potosí, SLP, México. 
\end{description}

\end{document}